\lstdefinelanguage{GAP}{%
  morekeywords={%
    Assert,Info,IsBound,QUIT,%
    TryNextMethod,Unbind,and,break,%
    continue,do,elif,%
    else,end,false,fi,for,%
    function,if,in,local,%
    mod,not,od,or,%
    quit,rec,repeat,return,%
    then,true,until,while%
  },%
  sensitive,%
  morecomment=[l]\#,%
  morestring=[b]",%
  morestring=[b]',%
}[keywords,comments,strings]
\newtheorem{thm}{Theorem}[section]
\newtheorem{cor}[thm]{Corollary}
\newtheorem{lem}[thm]{Lemma}
\newcommand{\abs}[1]{\left\vert#1\right\vert}
\newcommand{\bb}[1]{\textcolor{blue}{{#1}}}
\newcommand{\J}{\mathrel{\mathscr J}} 
\newcommand{\R}{\mathrel{\mathscr R}} 
\newcommand{\eL}{\mathrel{\mathscr L}} 
\newcommand{\HH}{\mathrel{\mathscr H}}
\newcommand\MMM{\stackrel{\mathclap{\normalfont\mbox{\tiny $M$}}}{=}}
\newcommand\JJJ{\stackrel{\mathclap{\normalfont\mbox{\tiny $\mathcal{J}_5$}}}{=}}
\begin{document}
\title{Identities of the Jones Monoid $\mathcal{J}_5$}
\author{M. H. Shahzamanian}
\address{M. H. Shahzamanian\\ Centro de Matemática e Departamento de Matemática, Faculdade de Ciências,
Universidade do Porto, Rua do Campo Alegre, 687, 4169-007 Porto,
Portugal}
\email{m.h.shahzamanian@fc.up.pt}
\thanks{ 2010 Mathematics Subject Classification. Primary 20M05, 20M07, 20M35.\\
Keywords and phrases: Monoid, Semigroup, Semigroup identity, Jones Monoid.}

\begin{abstract}
Jones monoids $\mathcal{J}_n$, for $1< n$, is a family of monoids relevant in knot theory.
The purpose of this paper is to characterize of the identities satisfied by the Jones monoid $\mathcal{J}_5$. 
\end{abstract}
\maketitle


\section{Introduction}
\label{sec:intro}

Let $1< n$.
The Kauffman monoid $\mathcal{K}_n$ is the monoid generated by
$$h_1,\dots,h_{n-1},c,$$ subject to the following relations:
\begin{equation}\label{Kn-relations}
\begin{split}
&h_{i}h_{j}=h_{j}h_{i},\   \forall 1\leq i,j\leq n-1,\ \ \text{if}\ \abs{i-j}\geq 2,\\
&h_{i}h_{j}h_{i}=h_{i},\   \forall 1\leq i,j\leq n-1,\ \ \text{if}\ \abs{i-j}=1,\\
&h_{i}^2=ch_{i}=h_{i}c,\   \forall 1\leq i\leq n-1.
\end{split}
\end{equation}
L. H. Kauffman in~\cite{Kau} invented these monoids, $\mathcal{K}_n$, as geometric objects. (The name was suggested in~\cite{Bor-Mir-Do-Pet}.)
Kauffman monoids play an important role in several parts of mathematics such as 
knot theory, low-dimensional topology, topological quantum field theory, quantum groups, etc.
Jones monoid $\mathcal{J}_n$ is the monoid generated by
$h_1,\dots,h_{n-1}$, subject to the first and second relations in~(\ref{Kn-relations}) and the relation $h_{i}^2=h_{i}$, for all $1\leq i\leq n-1$.
Jones monoid is a class of diagram monoids like Kauffman monoids. 
(The name was suggested in~\cite{Lau-Kwo-Fit} to honor the contribution of V. F. R. Jones to the theory). 

Chen et al. in~\cite{Che-Hu-Kit-Vol}, provide an algorithm for checking identities in $\mathcal{K}_3$. 
Kitov and Volkov in~\cite{Kit-Vol}, extend this algorithm to the Kauffman monoid $\mathcal{K}_4$ and also find a polynomial time algorithm for checking identities in the Jones monoid $\mathcal{J}_4$. They prove that the Kauffman monoids $\mathcal{K}_3$ and $\mathcal{K}_4$ satisfy
exactly the same identities. By delivering an identity, they show that $\mathcal{K}_4$ and $\mathcal{K}_5$ do not satisfy the same identities.
In the present paper, we follow this line of research and characterize the identities of the monoid $\mathcal{J}_5$. 
This characterization helps us to explain that for an identity when $\mathcal{J}_4$ satisfies it and $\mathcal{J}_5$ does not satisfy it.

The paper is organized as follows.  We begin by recalling background on monoids, identities and Jones monoids, so as to make the paper accessible to as broad an audience as possible.  
Also, in a separated section we investigate the Jones monoid $\mathcal{J}_5$ and give some lemmas which we need in the proof of the characterization.
We then present in the following section our characterization of identities in $\mathcal{J}_5$.


\section{Preliminaries}
\subsection{Monoids}
For standard notation and terminology relating to semigroups and monoids, we refer the reader to~\cite[Chapter 5]{Alm},\cite[Chapters 1-3]{Cli-Pre} and~\cite[Appendix A]{Rho-Ste}.
Let $M$ a finite monoid. Let $a,b\in M$. We say that $a\R b$ if $aM = bM$, $a\eL b$ if $Ma = Mb$ and $a\HH b$ if $a\R b$ and $a\eL b$. Also, we say that $a\J b$, if $MaM = MbM$.
The relations $\R,\eL$, $\HH$ and $\J$ are Green relations and all of them are equivalence relations first introduced by Green~\cite{Gre}.
An important property of finite monoids is the stability property that $J_m\cap Mm = L_m$ and $J_m\cap mM = R_m$, for every $m \in M$. 
A finite monoid is aperiodic if and only if its $\HH$-relation is trivial.

An element $e$ of $M$ is called idempotent if $e^2 = e$. The set of all idempotents of $M$ is denoted by $E(M)$. 
An idempotent $e$ of $M$ is the identity of the monoid $eMe$. The group of units $G_e$ of $eMe$ is called the maximal subgroup of $M$ at $e$. 

An element $m$ of $M$ is called (von Neumann) regular if there exists an element $n\in M$ such that $mnm=m$. Note that an element $m$ is regular if and only if $m\eL e$, for some $e\in E(M)$, if and only if $m\R f$, for some $f\in E(M)$. A $\J$-class $J$ is regular if all its elements are regular, if and only if $J$ has an idempotent, if and only if $J^2\cap J\neq\emptyset$. 


\subsection{Identities}
Let $X$ be a countably infinite set. 
We call $X$ an alphabet and each element $x\in X$ an letter.
Let $X^+$ be the set of all finite, non-empty words $x_1\cdots x_n$ with $x_1,\ldots,x_n\in X$.
The set $X^+$ forms a semigroup under concatenation which is called the free semigroup over $X$.
The monoid $X^*=(X^+)^1$ is called the free monoid over $X$.

Let $t = x_1\cdots x_n$ be a word of $X^+$ with $x_1,\ldots, x_n \in X$. 
The set $\{x_1,\ldots, x_n\}$ is called the content of $t$ and is denoted $c(t)$
while the number $n$ is referred to as the length of $t$ and is denoted $\abs{t}$. 
If $x \in c(t)$, we say that a letter $x$ occurs in a word $t$. 
We say that a word $s \in X^+$ occurs in $t$ if $t = t_1st_2$ for some $t_1, t_2 \in X^*$.
Let $u=u_1\cdots u_m$ be a word in $X^*$ with $u_1,\ldots,u_m\in X$.
We say that $u$ is a subword of the word $t$, 
if $t$ can be written $t = u'_0u_1u'_1\cdots u_mu'_m$ for some words $u'_0,u'_1,\ldots,u'_m \in X^*$. 
For a subset $Y$ of the set $X$, let $t_Y$ be the longest subword of $t$ with $c(t_Y)\subseteq Y$.

An identity is an expression $t_1=t_2$ with $t_1,t_2\in X^*$.
Let $M$ be a monoid.
We say that the identity $t_1=t_2$ holds in $M$ or $M$ satisfies the identity $t_1=t_2$ if $\phi(t_1) = \phi(t_2)$ for every homomorphism $\phi\colon X^*\rightarrow M$ and we denote it by $t_1 \MMM t_2$.


\subsection{Jones monoids}
Let $1< n$.
Jones monoid $\mathcal{J}_n$ is the monoid generated by 
$h_1,\dots,h_{n-1}$, subject to the following relations:
\begin{equation}\label{Jn-relations}
\begin{split}
&h_{i}h_{j}=h_{j}h_{i},    \forall i,j\in\{1,\dots,n-1\},\ \ \text{if}\ \abs{i-j}\geq 2,\\
&h_{i}h_{j}h_{i}=h_{i},    \forall i,j\in\{1,\dots,n-1\},\ \ \text{if}\ \abs{i-j}=1,\\
&h_{i}^2=h_{i},            \forall i\in\{1,\dots,n-1\}.
\end{split}
\end{equation}
Note that $\mathcal{J}_n$ is not a submonoid of $K_n$.

The Kauffman monoids $\mathcal{K}_n$ and Jones monoids $\mathcal{J}_n$ may be presented by geometric definitions with a series of diagram monoids (see \cite{Aui-Vol}). In the current paper, we only deal with the Jones monoid $\mathcal{J}_5$. Hence, we only mention a version of these geometric definitions which led to defining the Jones monoids.

Let $[n] \coloneq \{1, \dots , n\}$ and $[n]'\coloneq \{1', \ldots , n'\}$ be two disjoint copies of the set
of the first $n$ positive integers. 
Let $\mathcal{B}_n$ be the set of all partitions $\pi$ of the $2n$-element set $[n] \cup [n]'$ into 2-element blocks.
Such a pair can be represented by a wire diagram as shown in Figure~\ref{fig:Diag}. 
We draw a rectangular chip with $2n$ pins and represent the elements of $[n]$ by pins on the left hand side of the chip (left pins) while the
elements of $[n]'$ are represented by pins on the right hand side of the chip
(right pins). Usually we omit the numbers $1, 2, \ldots, n$. 
Now, for $\pi \in \mathcal{B}_n$, we represent each block of the partition $\pi$ is represented by
a line referred to as a wire. 
Thus, each wire connects two points; it is called
an $l$-wire if it connects two left points, an $r$-wire if it connects two right points,
and a $t$-wire if it connects a left point with a right point.
Thus, each wire connects two pins. The wire diagram in Figure~\ref{fig:Diag} corresponds to the pair
$\{\{1, 2\}, \{3, 5\}, \{4, 1'\}, \{2', 5'\}, \{3', 4'\} \}$.
\begin{figure}[ht]
\begin{center}\scriptsize
\begin{tikzpicture}
\filldraw[black] (0,0) circle (2pt) node[anchor=east]{$5~$};
\filldraw[black] (0,.5) circle (2pt) node[anchor=east]{$4~$};
\filldraw[black] (0,1) circle (2pt) node[anchor=east]{$3~$};
\filldraw[black] (0,1.5) circle (2pt) node[anchor=east]{$2~$};
\filldraw[black] (0,2) circle (2pt) node[anchor=east]{$1~$};
\filldraw[black] (1,0) circle (2pt) node[anchor=west]{$~5'$};
\filldraw[black] (1,.5) circle (2pt) node[anchor=west]{$~4'$};
\filldraw[black] (1,1) circle (2pt) node[anchor=west]{$~3'$};
\filldraw[black] (1,1.5) circle (2pt) node[anchor=west]{$~2'$};
\filldraw[black] (1,2) circle (2pt) node[anchor=west]{$~1'$};
\draw (0,2) edge [-, bend left=20] (0,1.5);
\draw (0,.5) edge [-, bend right=0] (1,2);
\draw (0,1) edge [-, bend left=20] (0,0);
\draw (1,1) edge [-, bend right=20] (1,.5);
\draw (1,1.5) edge [-, bend right=20] (1,0);
\draw [dashed] (-.11,-.11) rectangle (1.11,2.11);
\begin{scope}
\clip (0.25,2.115) rectangle (0.75,1.9);
\draw[thick] (0.5,2.1) circle(0.1);
\end{scope}
\end{tikzpicture}
\end{center}
    \caption{Wire diagram for an element of $\mathcal{B}_5$}
    \label{fig:Diag}
  \end{figure}
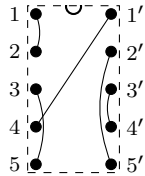
  
The multiply of two wire diagrams in $\mathcal{B}_n$, we shortcut the right pins of the first chip with the corresponding
left pins of the second chip.
Thus, we obtain a new chip whose left pins are the left pins of the first chip, right pins are the right pins of the second chip whose wires are sequences of consecutive wires of the factors, see Figure~\ref{fig:Mul} (for more detail refer to \cite{Aui-Vol}). 
\begin{figure}[ht]
\begin{center}\scriptsize
\begin{tikzpicture}
\filldraw[black] (0,0) circle (2pt) node[anchor=east]{};
\filldraw[black] (0,.5) circle (2pt) node[anchor=east]{};
\filldraw[black] (0,1) circle (2pt) node[anchor=east]{};
\filldraw[black] (0,1.5) circle (2pt) node[anchor=east]{};
\filldraw[black] (0,2) circle (2pt) node[anchor=east]{};
\filldraw[black] (1,0) circle (2pt) node[anchor=west]{};
\filldraw[black] (1,.5) circle (2pt) node[anchor=west]{};
\filldraw[black] (1,1) circle (2pt) node[anchor=west]{};
\filldraw[black] (1,1.5) circle (2pt) node[anchor=west]{};
\filldraw[black] (1,2) circle (2pt) node[anchor=west]{};
\draw (0,2) edge [-, bend left=20] (0,1.5);
\draw (0,.5) edge [-, bend right=0] (1,2);
\draw (0,1) edge [-, bend left=20] (0,0);
\draw (1,1) edge [-, bend right=20] (1,.5);
\draw (1,1.5) edge [-, bend right=20] (1,0);
\draw [dashed] (-.12,-.12) rectangle (1.12,2.12);
\node at (1.5,1.1) {$\bf{\times}$};
\begin{scope}
\clip (0.25,2.115) rectangle (0.75,1.9);
\draw[thick] (0.5,2.1) circle(0.1);
\end{scope}

\filldraw[black] (2,0) circle (2pt) node[anchor=east]{};
\filldraw[black] (2,.5) circle (2pt) node[anchor=east]{};
\filldraw[black] (2,1) circle (2pt) node[anchor=east]{};
\filldraw[black] (2,1.5) circle (2pt) node[anchor=east]{};
\filldraw[black] (2,2) circle (2pt) node[anchor=east]{};
\filldraw[black] (3,0) circle (2pt) node[anchor=west]{};
\filldraw[black] (3,.5) circle (2pt) node[anchor=west]{};
\filldraw[black] (3,1) circle (2pt) node[anchor=west]{};
\filldraw[black] (3,1.5) circle (2pt) node[anchor=west]{};
\filldraw[black] (3,2) circle (2pt) node[anchor=west]{};
\draw (2,2) edge [-, bend left=0] (3,1.5);
\draw (2,1.5) edge [-, bend right=0] (3,2);
\draw (2,1) edge [-, bend left=0] (3,1);
\draw (2,.5) edge [-, bend right=0] (3,0);
\draw (2,0) edge [-, bend right=0] (3,.5);
\draw [dashed] (1.88,-.12) rectangle (3.12,2.12);
\node at (3.5,1.1) {$\bf{=}$};
\begin{scope}
\clip (2.25,2.115) rectangle (3,1.9);
\draw[thick] (2.5,2.1) circle(0.1);
\end{scope}

\filldraw[black] (4,0) circle (2pt) node[anchor=east]{};
\filldraw[black] (4,.5) circle (2pt) node[anchor=east]{};
\filldraw[black] (4,1) circle (2pt) node[anchor=east]{};
\filldraw[black] (4,1.5) circle (2pt) node[anchor=east]{};
\filldraw[black] (4,2) circle (2pt) node[anchor=east]{};
\filldraw[black] (5,0) circle (2pt) node[anchor=west]{};
\filldraw[black] (5,.5) circle (2pt) node[anchor=west]{};
\filldraw[black] (5,1) circle (2pt) node[anchor=west]{};
\filldraw[black] (5,1.5) circle (2pt) node[anchor=west]{};
\filldraw[black] (5,2) circle (2pt) node[anchor=west]{};
\filldraw[black] (6,0) circle (2pt) node[anchor=west]{};
\filldraw[black] (6,.5) circle (2pt) node[anchor=west]{};
\filldraw[black] (6,1) circle (2pt) node[anchor=west]{};
\filldraw[black] (6,1.5) circle (2pt) node[anchor=west]{};
\filldraw[black] (6,2) circle (2pt) node[anchor=west]{};
\draw (4,2) edge [-, bend left=20] (4,1.5);
\draw (4,.5) edge [-, bend right=0] (5,2);
\draw (4,1) edge [-, bend left=20] (4,0);
\draw (5,1) edge [-, bend right=20] (5,.5);
\draw (5,1.5) edge [-, bend right=20] (5,0);
\draw (5,2) edge [-, bend left=0] (6,1.5);
\draw (5,1.5) edge [-, bend right=0] (6,2);
\draw (5,1) edge [-, bend left=0] (6,1);
\draw (5,.5) edge [-, bend right=0] (6,0);
\draw (5,0) edge [-, bend right=0] (6,.5);
\draw [dashed] (3.88,-0.12) rectangle (6.12,2.12);
\node at (6.5,1.1) {$\bf{=}$};
\begin{scope}
\clip (4.25,2.115) rectangle (4.75,1.9);
\draw[thick] (4.5,2.1) circle(0.1);
\end{scope}
\begin{scope}
\clip (5.25,2.115) rectangle (5.75,1.9);
\draw[thick] (5.5,2.1) circle(0.1);
\end{scope}

\filldraw[black] (7,0) circle (2pt) node[anchor=east]{};
\filldraw[black] (7,.5) circle (2pt) node[anchor=east]{};
\filldraw[black] (7,1) circle (2pt) node[anchor=east]{};
\filldraw[black] (7,1.5) circle (2pt) node[anchor=east]{};
\filldraw[black] (7,2) circle (2pt) node[anchor=east]{};
\filldraw[black] (8,0) circle (2pt) node[anchor=west]{};
\filldraw[black] (8,.5) circle (2pt) node[anchor=west]{};
\filldraw[black] (8,1) circle (2pt) node[anchor=west]{};
\filldraw[black] (8,1.5) circle (2pt) node[anchor=west]{};
\filldraw[black] (8,2) circle (2pt) node[anchor=west]{};
\draw (7,2) edge [-, bend left=20] (7,1.5);
\draw (7,1) edge [-, bend left=20] (7,0);
\draw (7,.5) edge [-, bend left=0] (8,1.5);
\draw (8,2) edge [-, bend right=20] (8,.5);
\draw (8,1) edge [-, bend right=20] (8,0);
\draw [dashed] (6.88,-0.12) rectangle (8.12,2.12);
\begin{scope}
\clip (7.25,2.115) rectangle (7.75,1.9);
\draw[ thick] (7.5,2.1) circle(0.1);
\end{scope}
\end{tikzpicture}
\end{center}
    \caption{Multiplication of wire diagrams}
    \label{fig:Mul}
  \end{figure}
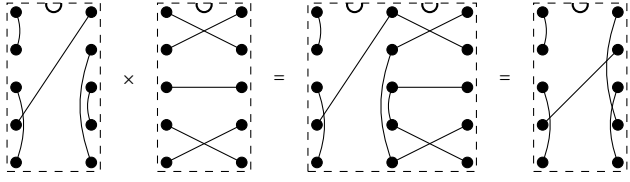

It is easy to see that the above defined multiplication in $\mathcal{B}_n$ is associative and
that the chip corresponds to the pair
$\{\{1, 1'\}, \{2, 2'\},\ldots  \{n, n'\}\}$ is the identity element with respect to the multiplication. The monoid $\mathcal{B}_n$ is known as the Brauer monoid~\cite{Bra}.
The Jones monoid $\mathcal{J}_n$ is the submonoid of $\mathcal{B}_n$ consisting of all elements of $\mathcal{B}_n$ that have a representation as a chip whose wires do not cross. The element $h_i$ in $\mathcal{J}_n$, for $1\leq i\leq n-1$, is the chip corresponds to the pair
$$\{\{i, i+1\}, \{i', (i+1)'\},\{j,j'\}\mid \text{for all}\ j\neq i, i+1)\},$$ see Figure~\ref{fig:hi-Jn}. These chips satisfy the relations~(\ref{Jn-relations}). Note that the cardinality of $\mathcal{J}_n$ is equal to $\frac{1}{n+1}\binom{2n}{n}$.
\begin{figure}[ht]
\begin{center}\scriptsize
\begin{tikzpicture}
\filldraw[black] (0,0) circle (2pt) node[anchor=east]{$n~$};
\node at (0,0.5) {$\bf{\vdots}$};
\filldraw[black] (0,1) circle (2pt) node[anchor=east]{$i+1~$};
\filldraw[black] (0,1.5) circle (2pt) node[anchor=east]{$i~$};
\node at (0,2) {$\vdots$};
\filldraw[black] (0,2.5) circle (2pt) node[anchor=east]{$2~$};
\filldraw[black] (0,3) circle (2pt) node[anchor=east]{$1~$};
\filldraw[black] (1,0) circle (2pt) node[anchor=west]{$~n'$};
\node at (1,.5) {$\vdots$};
\filldraw[black] (1,1) circle (2pt) node[anchor=west]{$~(i+1)'$};
\filldraw[black] (1,1.5) circle (2pt) node[anchor=west]{$~i'$};
\node at (1,2) {$\vdots$};
\filldraw[black] (1,2.5) circle (2pt) node[anchor=west]{$~2'$};
\filldraw[black] (1,3) circle (2pt) node[anchor=west]{$~1'$};
\draw (0,0) edge [-, bend left=0] (1,0);
\draw (0,2.5) edge [-, bend left=0] (1,2.5);
\draw (0,3) edge [-, bend left=0] (1,3);
\draw (0,1) edge [-, bend right=20] (0,1.5);
\draw (1,1) edge [-, bend left=20] (1,1.5);
\draw [dashed] (-.11,-.11) rectangle (1.11,3.11);
\begin{scope}
\clip (0.25,3.12) rectangle (0.75,2.8);
\draw[thick] (0.5,3.14) circle(0.1);
\end{scope}
\end{tikzpicture}
\end{center}
    \caption{Wire diagram for an element of $\mathcal{B}_5$}
    \label{fig:hi-Jn}
  \end{figure}
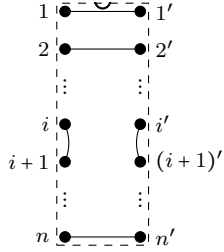
  

\section{The Jones monoid $\mathcal{J}_5$}

The Jones monoid $\mathcal{J}_5$ is aperiodic and regular and has three $\J$-classes named $A_1=\{1\}$, $A_2$ and $A_3$. 
The elements of the $\J$-class $A_2$ have one $l$-wire and one $r$-wire, and the elements of the $\J$-class $A_3$ have two $l$-wire and two $r$-wire.
The elements $h_1$, $h_2$, $h_3$ and $h_4$ are in $A_2$, and the elements $h_1h_3$, $h_1h_4$ and $h_2h_4$ are in $A_3$. Since $h_{i}h_{j}h_{i}=h_{i}$ for all $1\leq i,j\leq 4$ with $\abs{i-j}=1$ and $h_{i}^2=h_{i}$ for all $1\leq i\leq 4$, for every $a\in A_2\setminus\{h_1,h_2,h_3,h_4\}$, there exist integers $1\leq i,j\leq 4$ such that 
\begin{equation*}
a  = \begin{cases}
  h_{i}h_{i+1}\cdots h_{j-1}h_{j}& \text{if}\ i<j;\\
  h_{i}h_{i-1}\cdots h_{j+1}h_{j}& \text{if}\ i>j.
\end{cases}
\end{equation*}
The $\J$-class $A_2$ has four $\R$-classes and four $\eL$-classes (see Figure~\ref{fig:A2}). In Figure~\ref{fig:A2}, the rows corresponding to the $\R$-classes and the columns to the $\eL$-classes contained in $A_2$. 
\begin{figure}[ht]
\begin{center}\scriptsize
\begin{tikzpicture}
\filldraw[black] (0,0) circle (2pt) node[anchor=east]{};
\filldraw[black] (0,.5) circle (2pt) node[anchor=east]{};
\filldraw[black] (0,1) circle (2pt) node[anchor=east]{};
\filldraw[black] (0,1.5) circle (2pt) node[anchor=east]{};
\filldraw[black] (0,2) circle (2pt) node[anchor=east]{};
\filldraw[black] (1,0) circle (2pt) node[anchor=west]{};
\filldraw[black] (1,.5) circle (2pt) node[anchor=west]{};
\filldraw[black] (1,1) circle (2pt) node[anchor=west]{};
\filldraw[black] (1,1.5) circle (2pt) node[anchor=west]{};
\filldraw[black] (1,2) circle (2pt) node[anchor=west]{};
\draw (0,0) edge [-, bend left=0] (1,0);
\draw (0,.5) edge [-, bend right=0] (1,.5);
\draw (0,1) edge [-, bend left=0] (1,1);
\draw (0,1.5) edge [-, bend right=20] (0,2);
\draw (1,1.5) edge [-, bend left=20] (1,2);
\draw [dashed] (-.12,-.12) rectangle (1.12,2.12);
\node at (.5,-.3) {$h_1$};
\begin{scope}
\clip (0.25,2.115) rectangle (0.75,1.9);
\draw[thick] (0.5,2.14) circle(0.1);
\end{scope}

\filldraw[black] (2,0) circle (2pt) node[anchor=east]{};
\filldraw[black] (2,.5) circle (2pt) node[anchor=east]{};
\filldraw[black] (2,1) circle (2pt) node[anchor=east]{};
\filldraw[black] (2,1.5) circle (2pt) node[anchor=east]{};
\filldraw[black] (2,2) circle (2pt) node[anchor=east]{};
\filldraw[black] (3,0) circle (2pt) node[anchor=west]{};
\filldraw[black] (3,.5) circle (2pt) node[anchor=west]{};
\filldraw[black] (3,1) circle (2pt) node[anchor=west]{};
\filldraw[black] (3,1.5) circle (2pt) node[anchor=west]{};
\filldraw[black] (3,2) circle (2pt) node[anchor=west]{};
\draw (2,0) edge [-, bend left=0] (3,0);
\draw (2,.5) edge [-, bend right=0] (3,.5);
\draw (2,1) edge [-, bend left=0] (3,2);
\draw (2,1.5) edge [-, bend right=20] (2,2);
\draw (3,1.5) edge [-, bend right=20] (3,1);
\draw [dashed] (1.88,-.12) rectangle (3.12,2.12);
\node at (2.5,-.3) {$h_1h_2$};
\begin{scope}
\clip (2.25,2.115) rectangle (2.75,1.9);
\draw[thick] (2.5,2.14) circle(0.1);
\end{scope}

\filldraw[black] (4,0) circle (2pt) node[anchor=east]{};
\filldraw[black] (4,.5) circle (2pt) node[anchor=east]{};
\filldraw[black] (4,1) circle (2pt) node[anchor=east]{};
\filldraw[black] (4,1.5) circle (2pt) node[anchor=east]{};
\filldraw[black] (4,2) circle (2pt) node[anchor=east]{};
\filldraw[black] (5,0) circle (2pt) node[anchor=west]{};
\filldraw[black] (5,.5) circle (2pt) node[anchor=west]{};
\filldraw[black] (5,1) circle (2pt) node[anchor=west]{};
\filldraw[black] (5,1.5) circle (2pt) node[anchor=west]{};
\filldraw[black] (5,2) circle (2pt) node[anchor=west]{};
\draw (4,0) edge [-, bend left=0] (5,0);
\draw (4,.5) edge [-, bend right=0] (5,1.5);
\draw (4,1) edge [-, bend left=0] (5,2);
\draw (4,1.5) edge [-, bend right=20] (4,2);
\draw (5,1) edge [-, bend right=20] (5,.5);
\draw [dashed] (3.88,-.12) rectangle (5.12,2.12);
\node at (4.5,-.3) {$h_1h_2h_3$};
\begin{scope}
\clip (4.25,2.115) rectangle (4.75,1.9);
\draw[thick] (4.5,2.14) circle(0.1);
\end{scope}

\filldraw[black] (6,0) circle (2pt) node[anchor=east]{};
\filldraw[black] (6,.5) circle (2pt) node[anchor=east]{};
\filldraw[black] (6,1) circle (2pt) node[anchor=east]{};
\filldraw[black] (6,1.5) circle (2pt) node[anchor=east]{};
\filldraw[black] (6,2) circle (2pt) node[anchor=east]{};
\filldraw[black] (7,0) circle (2pt) node[anchor=west]{};
\filldraw[black] (7,.5) circle (2pt) node[anchor=west]{};
\filldraw[black] (7,1) circle (2pt) node[anchor=west]{};
\filldraw[black] (7,1.5) circle (2pt) node[anchor=west]{};
\filldraw[black] (7,2) circle (2pt) node[anchor=west]{};
\draw (6,0) edge [-, bend left=0] (7,1);
\draw (6,.5) edge [-, bend right=0] (7,1.5);
\draw (6,1) edge [-, bend left=0] (7,2);
\draw (6,1.5) edge [-, bend right=20] (6,2);
\draw (7,0) edge [-, bend left=20] (7,.5);
\draw [dashed] (5.88,-.12) rectangle (7.12,2.12);
\node at (6.5,-.3) {$h_1h_2h_3h_4$};
\begin{scope}
\clip (6.25,2.115) rectangle (6.75,1.9);
\draw[thick] (6.5,2.14) circle(0.1);
\end{scope}

\filldraw[black] (0,-3) circle (2pt) node[anchor=east]{};
\filldraw[black] (0,-2.5) circle (2pt) node[anchor=east]{};
\filldraw[black] (0,-2) circle (2pt) node[anchor=east]{};
\filldraw[black] (0,-1.5) circle (2pt) node[anchor=east]{};
\filldraw[black] (0,-1) circle (2pt) node[anchor=east]{};
\filldraw[black] (1,-3) circle (2pt) node[anchor=west]{};
\filldraw[black] (1,-2.5) circle (2pt) node[anchor=west]{};
\filldraw[black] (1,-2) circle (2pt) node[anchor=west]{};
\filldraw[black] (1,-1.5) circle (2pt) node[anchor=west]{};
\filldraw[black] (1,-1) circle (2pt) node[anchor=west]{};
\draw (0,-3) edge [-, bend left=0] (1,-3);
\draw (0,-2.5) edge [-, bend right=0] (1,-2.5);
\draw (0,-2) edge [-, bend right=20] (0,-1.5);
\draw (0,-1) edge [-, bend right=0] (1,-2);
\draw (1,-1.5) edge [-, bend left=20] (1,-1);
\draw [dashed] (-.12,-3.12) rectangle (1.12,-.88);
\node at (.5,-3.3) {$h_2h_1$};
\begin{scope}
\clip (0.25,-.885) rectangle (0.75,-1.1);
\draw[thick] (0.5,-.86) circle(0.1);
\end{scope}

\filldraw[black] (2,-3) circle (2pt) node[anchor=east]{};
\filldraw[black] (2,-2.5) circle (2pt) node[anchor=east]{};
\filldraw[black] (2,-2) circle (2pt) node[anchor=east]{};
\filldraw[black] (2,-1.5) circle (2pt) node[anchor=east]{};
\filldraw[black] (2,-1) circle (2pt) node[anchor=east]{};
\filldraw[black] (3,-3) circle (2pt) node[anchor=west]{};
\filldraw[black] (3,-2.5) circle (2pt) node[anchor=west]{};
\filldraw[black] (3,-2) circle (2pt) node[anchor=west]{};
\filldraw[black] (3,-1.5) circle (2pt) node[anchor=west]{};
\filldraw[black] (3,-1) circle (2pt) node[anchor=west]{};
\draw (2,-3) edge [-, bend left=0] (3,-3);
\draw (2,-2.5) edge [-, bend right=0] (3,-2.5);
\draw (2,-1) edge [-, bend left=0] (3,-1);
\draw (2,-1.5) edge [-, bend left=20] (2,-2);
\draw (3,-1.5) edge [-, bend right=20] (3,-2);
\draw [dashed] (1.88,-3.12) rectangle (3.12,-.88);
\node at (2.5,-3.3) {$h_2$};
\begin{scope}
\clip (2.25,-.885) rectangle (2.75,-1.1);
\draw[thick] (2.5,-.86) circle(0.1);
\end{scope}

\filldraw[black] (4,-3) circle (2pt) node[anchor=east]{};
\filldraw[black] (4,-2.5) circle (2pt) node[anchor=east]{};
\filldraw[black] (4,-2) circle (2pt) node[anchor=east]{};
\filldraw[black] (4,-1.5) circle (2pt) node[anchor=east]{};
\filldraw[black] (4,-1) circle (2pt) node[anchor=east]{};
\filldraw[black] (5,-3) circle (2pt) node[anchor=west]{};
\filldraw[black] (5,-2.5) circle (2pt) node[anchor=west]{};
\filldraw[black] (5,-2) circle (2pt) node[anchor=west]{};
\filldraw[black] (5,-1.5) circle (2pt) node[anchor=west]{};
\filldraw[black] (5,-1) circle (2pt) node[anchor=west]{};
\draw (4,-3) edge [-, bend left=0] (5,-3);
\draw (4,-2.5) edge [-, bend right=0] (5,-1.5);
\draw (4,-1) edge [-, bend left=0] (5,-1);
\draw (4,-1.5) edge [-, bend left=20] (4,-2);
\draw (5,-2) edge [-, bend right=20] (5,-2.5);
\draw [dashed] (3.88,-3.12) rectangle (5.12,-.88);
\node at (4.5,-3.3) {$h_2h_3$};
\begin{scope}
\clip (4.25,-.885) rectangle (4.75,-1.1);
\draw[thick] (4.5,-.86) circle(0.1);
\end{scope}

\filldraw[black] (6,-3) circle (2pt) node[anchor=east]{};
\filldraw[black] (6,-2.5) circle (2pt) node[anchor=east]{};
\filldraw[black] (6,-2) circle (2pt) node[anchor=east]{};
\filldraw[black] (6,-1.5) circle (2pt) node[anchor=east]{};
\filldraw[black] (6,-1) circle (2pt) node[anchor=east]{};
\filldraw[black] (7,-3) circle (2pt) node[anchor=west]{};
\filldraw[black] (7,-2.5) circle (2pt) node[anchor=west]{};
\filldraw[black] (7,-2) circle (2pt) node[anchor=west]{};
\filldraw[black] (7,-1.5) circle (2pt) node[anchor=west]{};
\filldraw[black] (7,-1) circle (2pt) node[anchor=west]{};
\draw (6,-3) edge [-, bend left=0] (7,-2);
\draw (6,-2.5) edge [-, bend right=0] (7,-1.5);
\draw (6,-1) edge [-, bend left=0] (7,-1);
\draw (6,-1.5) edge [-, bend left=20] (6,-2);
\draw (7,-3) edge [-, bend left=20] (7,-2.5);
\draw [dashed] (5.88,-3.12) rectangle (7.12,-.88);
\node at (6.5,-3.3) {$h_2h_3h_4$};
\begin{scope}
\clip (6.25,-.885) rectangle (6.75,-1.1);
\draw[thick] (6.5,-.86) circle(0.1);
\end{scope}

\filldraw[black] (0,-6) circle (2pt) node[anchor=east]{};
\filldraw[black] (0,-5.5) circle (2pt) node[anchor=east]{};
\filldraw[black] (0,-5) circle (2pt) node[anchor=east]{};
\filldraw[black] (0,-4.5) circle (2pt) node[anchor=east]{};
\filldraw[black] (0,-4) circle (2pt) node[anchor=east]{};
\filldraw[black] (1,-6) circle (2pt) node[anchor=west]{};
\filldraw[black] (1,-5.5) circle (2pt) node[anchor=west]{};
\filldraw[black] (1,-5) circle (2pt) node[anchor=west]{};
\filldraw[black] (1,-4.5) circle (2pt) node[anchor=west]{};
\filldraw[black] (1,-4) circle (2pt) node[anchor=west]{};
\draw (0,-6) edge [-, bend left=0] (1,-6);
\draw (0,-4.5) edge [-, bend right=0] (1,-5.5);
\draw (0,-4) edge [-, bend left=0] (1,-5);
\draw (0,-5.5) edge [-, bend right=20] (0,-5);
\draw (1,-4.5) edge [-, bend left=20] (1,-4);
\draw [dashed] (-.12,-6.12) rectangle (1.12,-3.88);
\node at (.5,-6.3) {$h_3h_2h_1$};
\begin{scope}
\clip (0.25,-3.885) rectangle (0.75,-4.1);
\draw[thick] (0.5,-3.86) circle(0.1);
\end{scope}

\filldraw[black] (2,-6) circle (2pt) node[anchor=east]{};
\filldraw[black] (2,-5.5) circle (2pt) node[anchor=east]{};
\filldraw[black] (2,-5) circle (2pt) node[anchor=east]{};
\filldraw[black] (2,-4.5) circle (2pt) node[anchor=east]{};
\filldraw[black] (2,-4) circle (2pt) node[anchor=east]{};
\filldraw[black] (3,-6) circle (2pt) node[anchor=west]{};
\filldraw[black] (3,-5.5) circle (2pt) node[anchor=west]{};
\filldraw[black] (3,-5) circle (2pt) node[anchor=west]{};
\filldraw[black] (3,-4.5) circle (2pt) node[anchor=west]{};
\filldraw[black] (3,-4) circle (2pt) node[anchor=west]{};
\draw (2,-6) edge [-, bend left=0] (3,-6);
\draw (2,-4.5) edge [-, bend right=0] (3,-5.5);
\draw (2,-4) edge [-, bend left=0] (3,-4);
\draw (2,-5.5) edge [-, bend right=20] (2,-5);
\draw (3,-4.5) edge [-, bend right=20] (3,-5);
\draw [dashed] (1.88,-6.12) rectangle (3.12,-3.88);
\node at (2.5,-6.3) {$h_3h_2$};
\begin{scope}
\clip (2.25,-3.885) rectangle (2.75,-4.1);
\draw[thick] (2.5,-3.86) circle(0.1);
\end{scope}

\filldraw[black] (4,-6) circle (2pt) node[anchor=east]{};
\filldraw[black] (4,-5.5) circle (2pt) node[anchor=east]{};
\filldraw[black] (4,-5) circle (2pt) node[anchor=east]{};
\filldraw[black] (4,-4.5) circle (2pt) node[anchor=east]{};
\filldraw[black] (4,-4) circle (2pt) node[anchor=east]{};
\filldraw[black] (5,-6) circle (2pt) node[anchor=west]{};
\filldraw[black] (5,-5.5) circle (2pt) node[anchor=west]{};
\filldraw[black] (5,-5) circle (2pt) node[anchor=west]{};
\filldraw[black] (5,-4.5) circle (2pt) node[anchor=west]{};
\filldraw[black] (5,-4) circle (2pt) node[anchor=west]{};
\draw (4,-6) edge [-, bend left=0] (5,-6);
\draw (4,-4.5) edge [-, bend right=0] (5,-4.5);
\draw (4,-4) edge [-, bend left=0] (5,-4);
\draw (4,-5.5) edge [-, bend right=20] (4,-5);
\draw (5,-5) edge [-, bend right=20] (5,-5.5);
\draw [dashed] (3.88,-6.12) rectangle (5.12,-3.88);
\node at (4.5,-6.3) {$h_3$};
\begin{scope}
\clip (4.25,-3.885) rectangle (4.75,-4.1);
\draw[thick] (4.5,-3.86) circle(0.1);
\end{scope}

\filldraw[black] (6,-6) circle (2pt) node[anchor=east]{};
\filldraw[black] (6,-5.5) circle (2pt) node[anchor=east]{};
\filldraw[black] (6,-5) circle (2pt) node[anchor=east]{};
\filldraw[black] (6,-4.5) circle (2pt) node[anchor=east]{};
\filldraw[black] (6,-4) circle (2pt) node[anchor=east]{};
\filldraw[black] (7,-6) circle (2pt) node[anchor=west]{};
\filldraw[black] (7,-5.5) circle (2pt) node[anchor=west]{};
\filldraw[black] (7,-5) circle (2pt) node[anchor=west]{};
\filldraw[black] (7,-4.5) circle (2pt) node[anchor=west]{};
\filldraw[black] (7,-4) circle (2pt) node[anchor=west]{};
\draw (6,-6) edge [-, bend left=0] (7,-5);
\draw (6,-4.5) edge [-, bend right=0] (7,-4.5);
\draw (6,-4) edge [-, bend left=0] (7,-4);
\draw (6,-5.5) edge [-, bend right=20] (6,-5);
\draw (7,-6) edge [-, bend left=20] (7,-5.5);
\draw [dashed] (5.88,-6.12) rectangle (7.12,-3.88);
\node at (6.5,-6.3) {$h_3h_4$};
\begin{scope}
\clip (6.25,-3.885) rectangle (6.75,-4.1);
\draw[thick] (6.5,-3.86) circle(0.1);
\end{scope}

\filldraw[black] (0,-9) circle (2pt) node[anchor=east]{};
\filldraw[black] (0,-8.5) circle (2pt) node[anchor=east]{};
\filldraw[black] (0,-8) circle (2pt) node[anchor=east]{};
\filldraw[black] (0,-7.5) circle (2pt) node[anchor=east]{};
\filldraw[black] (0,-7) circle (2pt) node[anchor=east]{};
\filldraw[black] (1,-9) circle (2pt) node[anchor=west]{};
\filldraw[black] (1,-8.5) circle (2pt) node[anchor=west]{};
\filldraw[black] (1,-8) circle (2pt) node[anchor=west]{};
\filldraw[black] (1,-7.5) circle (2pt) node[anchor=west]{};
\filldraw[black] (1,-7) circle (2pt) node[anchor=west]{};
\draw (0,-8) edge [-, bend left=0] (1,-9);
\draw (0,-7.5) edge [-, bend right=0] (1,-8.5);
\draw (0,-7) edge [-, bend left=0] (1,-8);
\draw (0,-9) edge [-, bend right=20] (0,-8.5);
\draw (1,-7.5) edge [-, bend left=20] (1,-7);
\draw [dashed] (-.12,-9.12) rectangle (1.12,-6.88);
\node at (.5,-9.3) {$h_4h_3h_2h_1$};
\begin{scope}
\clip (0.25,-6.885) rectangle (0.75,-7.1);
\draw[thick] (0.5,-6.86) circle(0.1);
\end{scope}

\filldraw[black] (2,-9) circle (2pt) node[anchor=east]{};
\filldraw[black] (2,-8.5) circle (2pt) node[anchor=east]{};
\filldraw[black] (2,-8) circle (2pt) node[anchor=east]{};
\filldraw[black] (2,-7.5) circle (2pt) node[anchor=east]{};
\filldraw[black] (2,-7) circle (2pt) node[anchor=east]{};
\filldraw[black] (3,-9) circle (2pt) node[anchor=west]{};
\filldraw[black] (3,-8.5) circle (2pt) node[anchor=west]{};
\filldraw[black] (3,-8) circle (2pt) node[anchor=west]{};
\filldraw[black] (3,-7.5) circle (2pt) node[anchor=west]{};
\filldraw[black] (3,-7) circle (2pt) node[anchor=west]{};
\draw (2,-8) edge [-, bend left=0] (3,-9);
\draw (2,-7.5) edge [-, bend right=0] (3,-8.5);
\draw (2,-7) edge [-, bend left=0] (3,-7);
\draw (2,-9) edge [-, bend right=20] (2,-8.5);
\draw (3,-7.5) edge [-, bend right=20] (3,-8);
\draw [dashed] (1.88,-9.12) rectangle (3.12,-6.88);
\node at (2.5,-9.3) {$h_4h_3h_2$};
\begin{scope}
\clip (2.25,-6.885) rectangle (2.75,-7.1);
\draw[thick] (2.5,-6.86) circle(0.1);
\end{scope}

\filldraw[black] (4,-9) circle (2pt) node[anchor=east]{};
\filldraw[black] (4,-8.5) circle (2pt) node[anchor=east]{};
\filldraw[black] (4,-8) circle (2pt) node[anchor=east]{};
\filldraw[black] (4,-7.5) circle (2pt) node[anchor=east]{};
\filldraw[black] (4,-7) circle (2pt) node[anchor=east]{};
\filldraw[black] (5,-9) circle (2pt) node[anchor=west]{};
\filldraw[black] (5,-8.5) circle (2pt) node[anchor=west]{};
\filldraw[black] (5,-8) circle (2pt) node[anchor=west]{};
\filldraw[black] (5,-7.5) circle (2pt) node[anchor=west]{};
\filldraw[black] (5,-7) circle (2pt) node[anchor=west]{};
\draw (4,-8) edge [-, bend left=0] (5,-9);
\draw (4,-7.5) edge [-, bend right=0] (5,-7.5);
\draw (4,-7) edge [-, bend left=0] (5,-7);
\draw (4,-9) edge [-, bend right=20] (4,-8.5);
\draw (5,-8) edge [-, bend right=20] (5,-8.5);
\draw [dashed] (3.88,-9.12) rectangle (5.12,-6.88);
\node at (4.5,-9.3) {$h_4h_3$};
\begin{scope}
\clip (4.25,-6.885) rectangle (4.75,-7.1);
\draw[thick] (4.5,-6.86) circle(0.1);
\end{scope}

\filldraw[black] (6,-9) circle (2pt) node[anchor=east]{};
\filldraw[black] (6,-8.5) circle (2pt) node[anchor=east]{};
\filldraw[black] (6,-8) circle (2pt) node[anchor=east]{};
\filldraw[black] (6,-7.5) circle (2pt) node[anchor=east]{};
\filldraw[black] (6,-7) circle (2pt) node[anchor=east]{};
\filldraw[black] (7,-9) circle (2pt) node[anchor=west]{};
\filldraw[black] (7,-8.5) circle (2pt) node[anchor=west]{};
\filldraw[black] (7,-8) circle (2pt) node[anchor=west]{};
\filldraw[black] (7,-7.5) circle (2pt) node[anchor=west]{};
\filldraw[black] (7,-7) circle (2pt) node[anchor=west]{};
\draw (6,-8) edge [-, bend left=0] (7,-8);
\draw (6,-7.5) edge [-, bend right=0] (7,-7.5);
\draw (6,-7) edge [-, bend left=0] (7,-7);
\draw (6,-9) edge [-, bend right=20] (6,-8.5);
\draw (7,-9) edge [-, bend left=20] (7,-8.5);
\draw [dashed] (5.88,-9.12) rectangle (7.12,-6.88);
\node at (6.5,-9.3) {$h_4$};
\begin{scope}
\clip (6.25,-6.885) rectangle (6.75,-7.1);
\draw[thick] (6.5,-6.86) circle(0.1);
\end{scope}
\end{tikzpicture}
\end{center}
    \caption{The elements of the $\J$-class $A_2$}
    \label{fig:A2}
\end{figure}
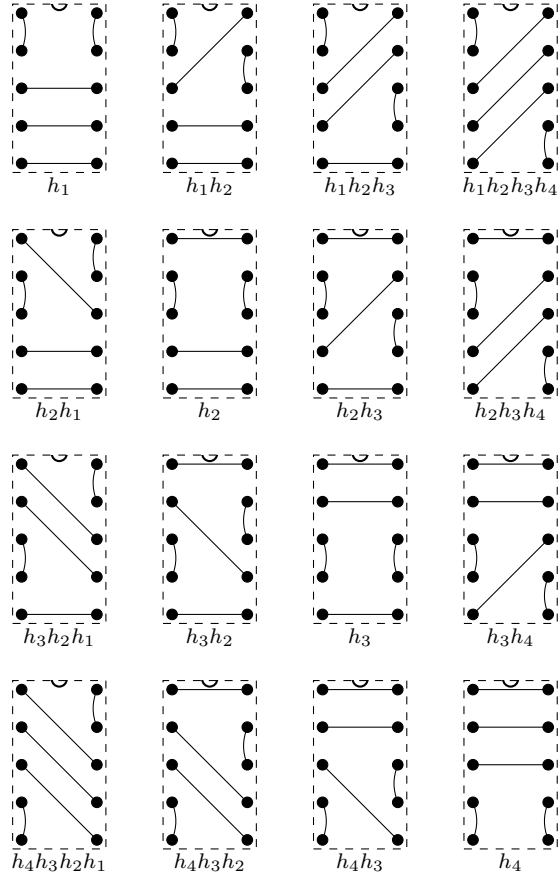

If $a\in A_3$, then there exist elements $a_1, a_2\in J_5$ and $b\in\{h_1h_3,h_1h_4,h_2h_4\}$ such that $a=a_1ba_2$.
The $\J$-class $A_2$ has five $\R$-classes and five $\eL$-classes (see Figure~\ref{fig:A3}). In Figure~\ref{fig:A3}, the rows corresponding to the $\R$-classes and the columns to the $\eL$-classes contained in $A_3$. 
\begin{figure}[ht]
\begin{center}\scriptsize
\begin{tikzpicture}
\filldraw[black] (0,0) circle (2pt) node[anchor=east]{};
\filldraw[black] (0,.5) circle (2pt) node[anchor=east]{};
\filldraw[black] (0,1) circle (2pt) node[anchor=east]{};
\filldraw[black] (0,1.5) circle (2pt) node[anchor=east]{};
\filldraw[black] (0,2) circle (2pt) node[anchor=east]{};
\filldraw[black] (1,0) circle (2pt) node[anchor=west]{};
\filldraw[black] (1,.5) circle (2pt) node[anchor=west]{};
\filldraw[black] (1,1) circle (2pt) node[anchor=west]{};
\filldraw[black] (1,1.5) circle (2pt) node[anchor=west]{};
\filldraw[black] (1,2) circle (2pt) node[anchor=west]{};
\draw (0,0) edge [-, bend left=0] (1,0);
\draw (0,.5) edge [-, bend right=20] (0,1);
\draw (1,.5) edge [-, bend left=20] (1,1);
\draw (0,1.5) edge [-, bend right=20] (0,2);
\draw (1,1.5) edge [-, bend left=20] (1,2);
\draw [dashed] (-.12,-.12) rectangle (1.12,2.12);
\node at (.5,-.3) {$h_1h_3$};
\begin{scope}
\clip (0.25,2.115) rectangle (0.75,1.9);
\draw[thick] (0.5,2.14) circle(0.1);
\end{scope}

\filldraw[black] (2,0) circle (2pt) node[anchor=east]{};
\filldraw[black] (2,.5) circle (2pt) node[anchor=east]{};
\filldraw[black] (2,1) circle (2pt) node[anchor=east]{};
\filldraw[black] (2,1.5) circle (2pt) node[anchor=east]{};
\filldraw[black] (2,2) circle (2pt) node[anchor=east]{};
\filldraw[black] (3,0) circle (2pt) node[anchor=west]{};
\filldraw[black] (3,.5) circle (2pt) node[anchor=west]{};
\filldraw[black] (3,1) circle (2pt) node[anchor=west]{};
\filldraw[black] (3,1.5) circle (2pt) node[anchor=west]{};
\filldraw[black] (3,2) circle (2pt) node[anchor=west]{};
\draw (2,0) edge [-, bend left=0] (3,0);
\draw (2,.5) edge [-, bend right=20] (2,1);
\draw (3,.5) edge [-, bend left=30] (3,2);
\draw (2,1.5) edge [-, bend right=20] (2,2);
\draw (3,1.5) edge [-, bend right=20] (3,1);
\draw [dashed] (1.88,-.12) rectangle (3.12,2.12);
\node at (2.5,-.3) {$h_1h_3h_2 $};
\begin{scope}
\clip (2.25,2.115) rectangle (2.75,1.9);
\draw[thick] (2.5,2.14) circle(0.1);
\end{scope}

\filldraw[black] (4,0) circle (2pt) node[anchor=east]{};
\filldraw[black] (4,.5) circle (2pt) node[anchor=east]{};
\filldraw[black] (4,1) circle (2pt) node[anchor=east]{};
\filldraw[black] (4,1.5) circle (2pt) node[anchor=east]{};
\filldraw[black] (4,2) circle (2pt) node[anchor=east]{};
\filldraw[black] (5,0) circle (2pt) node[anchor=west]{};
\filldraw[black] (5,.5) circle (2pt) node[anchor=west]{};
\filldraw[black] (5,1) circle (2pt) node[anchor=west]{};
\filldraw[black] (5,1.5) circle (2pt) node[anchor=west]{};
\filldraw[black] (5,2) circle (2pt) node[anchor=west]{};
\draw (4,0) edge [-, bend left=0] (5,1);
\draw (4,.5) edge [-, bend right=20] (4,1);
\draw (5,1.5) edge [-, bend left=20] (5,2);
\draw (4,1.5) edge [-, bend right=20] (4,2);
\draw (5,0) edge [-, bend left=20] (5,.5);
\draw [dashed] (3.88,-.12) rectangle (5.12,2.12);
\node at (4.5,-.3) {$h_1h_3h_4$};
\begin{scope}
\clip (4.25,2.115) rectangle (4.75,1.9);
\draw[thick] (4.5,2.14) circle(0.1);
\end{scope}

\filldraw[black] (6,0) circle (2pt) node[anchor=east]{};
\filldraw[black] (6,.5) circle (2pt) node[anchor=east]{};
\filldraw[black] (6,1) circle (2pt) node[anchor=east]{};
\filldraw[black] (6,1.5) circle (2pt) node[anchor=east]{};
\filldraw[black] (6,2) circle (2pt) node[anchor=east]{};
\filldraw[black] (7,0) circle (2pt) node[anchor=west]{};
\filldraw[black] (7,.5) circle (2pt) node[anchor=west]{};
\filldraw[black] (7,1) circle (2pt) node[anchor=west]{};
\filldraw[black] (7,1.5) circle (2pt) node[anchor=west]{};
\filldraw[black] (7,2) circle (2pt) node[anchor=west]{};
\draw (6,0) edge [-, bend left=0] (7,2);
\draw (6,.5) edge [-, bend right=20] (6,1);
\draw (7,1) edge [-, bend left=20] (7,1.5);
\draw (6,1.5) edge [-, bend right=20] (6,2);
\draw (7,0) edge [-, bend left=20] (7,.5);
\draw [dashed] (5.88,-.12) rectangle (7.12,2.12);
\node at (6.5,-.3) {$h_1h_3h_2h_4$};
\begin{scope}
\clip (6.25,2.115) rectangle (6.75,1.9);
\draw[thick] (6.5,2.14) circle(0.1);
\end{scope}

\filldraw[black] (8,0) circle (2pt) node[anchor=east]{};
\filldraw[black] (8,.5) circle (2pt) node[anchor=east]{};
\filldraw[black] (8,1) circle (2pt) node[anchor=east]{};
\filldraw[black] (8,1.5) circle (2pt) node[anchor=east]{};
\filldraw[black] (8,2) circle (2pt) node[anchor=east]{};
\filldraw[black] (9,0) circle (2pt) node[anchor=west]{};
\filldraw[black] (9,.5) circle (2pt) node[anchor=west]{};
\filldraw[black] (9,1) circle (2pt) node[anchor=west]{};
\filldraw[black] (9,1.5) circle (2pt) node[anchor=west]{};
\filldraw[black] (9,2) circle (2pt) node[anchor=west]{};
\draw (8,0) edge [-, bend left=0] (9,2);
\draw (8,.5) edge [-, bend right=20] (8,1);
\draw (9,1.5) edge [-, bend right=30] (9,0);
\draw (8,1.5) edge [-, bend right=20] (8,2);
\draw (9,1) edge [-, bend right=20] (9,.5);
\draw [dashed] (7.88,-.12) rectangle (9.12,2.12);
\node at (8.5,-.3) {$h_1h_3h_2h_4h_3$};
\begin{scope}
\clip (8.25,2.115) rectangle (8.75,1.9);
\draw[thick] (8.5,2.14) circle(0.1);
\end{scope}

\filldraw[black] (0,-3) circle (2pt) node[anchor=east]{};
\filldraw[black] (0,-2.5) circle (2pt) node[anchor=east]{};
\filldraw[black] (0,-2) circle (2pt) node[anchor=east]{};
\filldraw[black] (0,-1.5) circle (2pt) node[anchor=east]{};
\filldraw[black] (0,-1) circle (2pt) node[anchor=east]{};
\filldraw[black] (1,-3) circle (2pt) node[anchor=west]{};
\filldraw[black] (1,-2.5) circle (2pt) node[anchor=west]{};
\filldraw[black] (1,-2) circle (2pt) node[anchor=west]{};
\filldraw[black] (1,-1.5) circle (2pt) node[anchor=west]{};
\filldraw[black] (1,-1) circle (2pt) node[anchor=west]{};
\draw (0,-3) edge [-, bend left=0] (1,-3);
\draw (0,-1.5) edge [-, bend left=20] (0,-2);
\draw (1,-2.5) edge [-, bend left=20] (1,-2);
\draw (0,-2.5) edge [-, bend right=30] (0,-1);
\draw (1,-1.5) edge [-, bend left=20] (1,-1);
\draw [dashed] (-.12,-3.12) rectangle (1.12,-0.88);
\node at (.5,-3.3) {$h_2h_1h_3$};
\begin{scope}
\clip (0.25,-0.885) rectangle (0.75,-1.1);
\draw[thick] (0.5,-.86) circle(0.1);
\end{scope}

\filldraw[black] (2,-3) circle (2pt) node[anchor=east]{};
\filldraw[black] (2,-2.5) circle (2pt) node[anchor=east]{};
\filldraw[black] (2,-2) circle (2pt) node[anchor=east]{};
\filldraw[black] (2,-1.5) circle (2pt) node[anchor=east]{};
\filldraw[black] (2,-1) circle (2pt) node[anchor=east]{};
\filldraw[black] (3,-3) circle (2pt) node[anchor=west]{};
\filldraw[black] (3,-2.5) circle (2pt) node[anchor=west]{};
\filldraw[black] (3,-2) circle (2pt) node[anchor=west]{};
\filldraw[black] (3,-1.5) circle (2pt) node[anchor=west]{};
\filldraw[black] (3,-1) circle (2pt) node[anchor=west]{};
\draw (2,-3) edge [-, bend left=0] (3,-3);
\draw (2,-1.5) edge [-, bend left=20] (2,-2);
\draw (3,-2.5) edge [-, bend left=30] (3,-1);
\draw (2,-2.5) edge [-, bend right=30] (2,-1);
\draw (3,-1.5) edge [-, bend right=20] (3,-2);
\draw [dashed] (1.88,-3.12) rectangle (3.12,-.88);
\node at (2.5,-3.3) {$h_2h_1h_3h_2$};
\begin{scope}
\clip (2.25,-0.885) rectangle (2.75,-1.1);
\draw[thick] (2.5,-.86) circle(0.1);
\end{scope}

\filldraw[black] (4,-3) circle (2pt) node[anchor=east]{};
\filldraw[black] (4,-2.5) circle (2pt) node[anchor=east]{};
\filldraw[black] (4,-2) circle (2pt) node[anchor=east]{};
\filldraw[black] (4,-1.5) circle (2pt) node[anchor=east]{};
\filldraw[black] (4,-1) circle (2pt) node[anchor=east]{};
\filldraw[black] (5,-3) circle (2pt) node[anchor=west]{};
\filldraw[black] (5,-2.5) circle (2pt) node[anchor=west]{};
\filldraw[black] (5,-2) circle (2pt) node[anchor=west]{};
\filldraw[black] (5,-1.5) circle (2pt) node[anchor=west]{};
\filldraw[black] (5,-1) circle (2pt) node[anchor=west]{};
\draw (4,-3) edge [-, bend left=0] (5,-2);
\draw (4,-1.5) edge [-, bend left=20] (4,-2);
\draw (5,-1.5) edge [-, bend left=20] (5,-1);
\draw (4,-2.5) edge [-, bend right=30] (4,-1);
\draw (5,-3) edge [-, bend left=20] (5,-2.5);
\draw [dashed] (3.88,-3.12) rectangle (5.12,-.88);
\node at (4.5,-3.3) {$h_2h_1h_3h_4$};
\begin{scope}
\clip (4.25,-0.885) rectangle (4.75,-1.1);
\draw[thick] (4.5,-.86) circle(0.1);
\end{scope}

\filldraw[black] (6,-3) circle (2pt) node[anchor=east]{};
\filldraw[black] (6,-2.5) circle (2pt) node[anchor=east]{};
\filldraw[black] (6,-2) circle (2pt) node[anchor=east]{};
\filldraw[black] (6,-1.5) circle (2pt) node[anchor=east]{};
\filldraw[black] (6,-1) circle (2pt) node[anchor=east]{};
\filldraw[black] (7,-3) circle (2pt) node[anchor=west]{};
\filldraw[black] (7,-2.5) circle (2pt) node[anchor=west]{};
\filldraw[black] (7,-2) circle (2pt) node[anchor=west]{};
\filldraw[black] (7,-1.5) circle (2pt) node[anchor=west]{};
\filldraw[black] (7,-1) circle (2pt) node[anchor=west]{};
\draw (6,-3) edge [-, bend left=0] (7,-1);
\draw (6,-1.5) edge [-, bend left=20] (6,-2);
\draw (7,-2) edge [-, bend left=20] (7,-1.5);
\draw (6,-2.5) edge [-, bend right=30] (6,-1);
\draw (7,-3) edge [-, bend left=20] (7,-2.5);
\draw [dashed] (5.88,-3.12) rectangle (7.12,-.88);
\node at (6.5,-3.3) {$h_2h_1h_3h_2h_4$};
\begin{scope}
\clip (6.25,-0.885) rectangle (6.75,-1.1);
\draw[thick] (6.5,-.86) circle(0.1);
\end{scope}

\filldraw[black] (8,-3) circle (2pt) node[anchor=east]{};
\filldraw[black] (8,-2.5) circle (2pt) node[anchor=east]{};
\filldraw[black] (8,-2) circle (2pt) node[anchor=east]{};
\filldraw[black] (8,-1.5) circle (2pt) node[anchor=east]{};
\filldraw[black] (8,-1) circle (2pt) node[anchor=east]{};
\filldraw[black] (9,-3) circle (2pt) node[anchor=west]{};
\filldraw[black] (9,-2.5) circle (2pt) node[anchor=west]{};
\filldraw[black] (9,-2) circle (2pt) node[anchor=west]{};
\filldraw[black] (9,-1.5) circle (2pt) node[anchor=west]{};
\filldraw[black] (9,-1) circle (2pt) node[anchor=west]{};
\draw (8,-3) edge [-, bend left=0] (9,-1);
\draw (8,-1.5) edge [-, bend left=20] (8,-2);
\draw (9,-1.5) edge [-, bend right=30] (9,-3);
\draw (8,-2.5) edge [-, bend right=30] (8,-1);
\draw (9,-2) edge [-, bend right=20] (9,-2.5);
\draw [dashed] (7.88,-3.12) rectangle (9.12,-.88);
\node at (8.5,-3.3) {$h_2h_1h_3h_2h_4h_3$};
\begin{scope}
\clip (8.25,-0.885) rectangle (8.75,-1.1);
\draw[thick] (8.5,-.86) circle(0.1);
\end{scope}

\filldraw[black] (0,-6) circle (2pt) node[anchor=east]{};
\filldraw[black] (0,-5.5) circle (2pt) node[anchor=east]{};
\filldraw[black] (0,-5) circle (2pt) node[anchor=east]{};
\filldraw[black] (0,-4.5) circle (2pt) node[anchor=east]{};
\filldraw[black] (0,-4) circle (2pt) node[anchor=east]{};
\filldraw[black] (1,-6) circle (2pt) node[anchor=west]{};
\filldraw[black] (1,-5.5) circle (2pt) node[anchor=west]{};
\filldraw[black] (1,-5) circle (2pt) node[anchor=west]{};
\filldraw[black] (1,-4.5) circle (2pt) node[anchor=west]{};
\filldraw[black] (1,-4) circle (2pt) node[anchor=west]{};
\draw (0,-5) edge [-, bend left=0] (1,-6);
\draw (0,-5.5) edge [-, bend left=20] (0,-6);
\draw (1,-5.5) edge [-, bend left=20] (1,-5);
\draw (0,-4.5) edge [-, bend right=20] (0,-4);
\draw (1,-4.5) edge [-, bend left=20] (1,-4);
\draw [dashed] (-.12,-6.12) rectangle (1.12,-3.88);
\node at (.5,-6.3) {$h_1h_4h_3$};
\begin{scope}
\clip (0.25,-3.885) rectangle (0.75,-4.1);
\draw[thick] (0.5,-3.86) circle(0.1);
\end{scope}

\filldraw[black] (2,-6) circle (2pt) node[anchor=east]{};
\filldraw[black] (2,-5.5) circle (2pt) node[anchor=east]{};
\filldraw[black] (2,-5) circle (2pt) node[anchor=east]{};
\filldraw[black] (2,-4.5) circle (2pt) node[anchor=east]{};
\filldraw[black] (2,-4) circle (2pt) node[anchor=east]{};
\filldraw[black] (3,-6) circle (2pt) node[anchor=west]{};
\filldraw[black] (3,-5.5) circle (2pt) node[anchor=west]{};
\filldraw[black] (3,-5) circle (2pt) node[anchor=west]{};
\filldraw[black] (3,-4.5) circle (2pt) node[anchor=west]{};
\filldraw[black] (3,-4) circle (2pt) node[anchor=west]{};
\draw (2,-5) edge [-, bend left=0] (3,-6);
\draw (2,-5.5) edge [-, bend left=20] (2,-6);
\draw (3,-5.5) edge [-, bend left=30] (3,-4);
\draw (2,-4.5) edge [-, bend right=20] (2,-4);
\draw (3,-4.5) edge [-, bend right=20] (3,-5);
\draw [dashed] (1.88,-6.12) rectangle (3.12,-3.88);
\node at (2.5,-6.3) {$h_1h_4h_3h_2$};
\begin{scope}
\clip (2.25,-3.885) rectangle (2.75,-4.1);
\draw[thick] (2.5,-3.86) circle(0.1);
\end{scope}

\filldraw[black] (4,-6) circle (2pt) node[anchor=east]{};
\filldraw[black] (4,-5.5) circle (2pt) node[anchor=east]{};
\filldraw[black] (4,-5) circle (2pt) node[anchor=east]{};
\filldraw[black] (4,-4.5) circle (2pt) node[anchor=east]{};
\filldraw[black] (4,-4) circle (2pt) node[anchor=east]{};
\filldraw[black] (5,-6) circle (2pt) node[anchor=west]{};
\filldraw[black] (5,-5.5) circle (2pt) node[anchor=west]{};
\filldraw[black] (5,-5) circle (2pt) node[anchor=west]{};
\filldraw[black] (5,-4.5) circle (2pt) node[anchor=west]{};
\filldraw[black] (5,-4) circle (2pt) node[anchor=west]{};
\draw (4,-5) edge [-, bend left=0] (5,-5);
\draw (4,-5.5) edge [-, bend left=20] (4,-6);
\draw (5,-4.5) edge [-, bend left=20] (5,-4);
\draw (4,-4.5) edge [-, bend right=20] (4,-4);
\draw (5,-6) edge [-, bend left=20] (5,-5.5);
\draw [dashed] (3.88,-6.12) rectangle (5.12,-3.88);
\node at (4.5,-6.3) {$h_1h_4$};
\begin{scope}
\clip (4.25,-3.885) rectangle (4.75,-4.1);
\draw[thick] (4.5,-3.86) circle(0.1);
\end{scope}

\filldraw[black] (6,-6) circle (2pt) node[anchor=east]{};
\filldraw[black] (6,-5.5) circle (2pt) node[anchor=east]{};
\filldraw[black] (6,-5) circle (2pt) node[anchor=east]{};
\filldraw[black] (6,-4.5) circle (2pt) node[anchor=east]{};
\filldraw[black] (6,-4) circle (2pt) node[anchor=east]{};
\filldraw[black] (7,-6) circle (2pt) node[anchor=west]{};
\filldraw[black] (7,-5.5) circle (2pt) node[anchor=west]{};
\filldraw[black] (7,-5) circle (2pt) node[anchor=west]{};
\filldraw[black] (7,-4.5) circle (2pt) node[anchor=west]{};
\filldraw[black] (7,-4) circle (2pt) node[anchor=west]{};
\draw (6,-5) edge [-, bend left=0] (7,-4);
\draw (6,-5.5) edge [-, bend left=20] (6,-6);
\draw (7,-5) edge [-, bend left=20] (7,-4.5);
\draw (6,-4.5) edge [-, bend right=20] (6,-4);
\draw (7,-6) edge [-, bend left=20] (7,-5.5);
\draw [dashed] (5.88,-6.12) rectangle (7.12,-3.88);
\node at (6.5,-6.3) {$h_1h_2h_4$};
\begin{scope}
\clip (6.25,-3.885) rectangle (6.75,-4.1);
\draw[thick] (6.5,-3.86) circle(0.1);
\end{scope}

\filldraw[black] (8,-6) circle (2pt) node[anchor=east]{};
\filldraw[black] (8,-5.5) circle (2pt) node[anchor=east]{};
\filldraw[black] (8,-5) circle (2pt) node[anchor=east]{};
\filldraw[black] (8,-4.5) circle (2pt) node[anchor=east]{};
\filldraw[black] (8,-4) circle (2pt) node[anchor=east]{};
\filldraw[black] (9,-6) circle (2pt) node[anchor=west]{};
\filldraw[black] (9,-5.5) circle (2pt) node[anchor=west]{};
\filldraw[black] (9,-5) circle (2pt) node[anchor=west]{};
\filldraw[black] (9,-4.5) circle (2pt) node[anchor=west]{};
\filldraw[black] (9,-4) circle (2pt) node[anchor=west]{};
\draw (8,-5) edge [-, bend left=0] (9,-4);
\draw (8,-5.5) edge [-, bend left=20] (8,-6);
\draw (9,-4.5) edge [-, bend right=30] (9,-6);
\draw (8,-4.5) edge [-, bend right=20] (8,-4);
\draw (9,-5) edge [-, bend right=20] (9,-5.5);
\draw [dashed] (7.88,-6.12) rectangle (9.12,-3.88);
\node at (8.5,-6.3) {$h_1h_2h_4h_3$};
\begin{scope}
\clip (8.25,-3.885) rectangle (8.75,-4.1);
\draw[thick] (8.5,-3.86) circle(0.1);
\end{scope}

\filldraw[black] (0,-9) circle (2pt) node[anchor=east]{};
\filldraw[black] (0,-8.5) circle (2pt) node[anchor=east]{};
\filldraw[black] (0,-8) circle (2pt) node[anchor=east]{};
\filldraw[black] (0,-7.5) circle (2pt) node[anchor=east]{};
\filldraw[black] (0,-7) circle (2pt) node[anchor=east]{};
\filldraw[black] (1,-9) circle (2pt) node[anchor=west]{};
\filldraw[black] (1,-8.5) circle (2pt) node[anchor=west]{};
\filldraw[black] (1,-8) circle (2pt) node[anchor=west]{};
\filldraw[black] (1,-7.5) circle (2pt) node[anchor=west]{};
\filldraw[black] (1,-7) circle (2pt) node[anchor=west]{};
\draw (0,-7) edge [-, bend left=0] (1,-9);
\draw (0,-8.5) edge [-, bend left=20] (0,-9);
\draw (1,-8.5) edge [-, bend left=20] (1,-8);
\draw (0,-7.5) edge [-, bend left=20] (0,-8);
\draw (1,-7.5) edge [-, bend left=20] (1,-7);
\draw [dashed] (-.12,-9.12) rectangle (1.12,-6.88);
\node at (.5,-9.3) {$h_2h_1h_4h_3$};
\begin{scope}
\clip (0.25,-6.885) rectangle (0.75,-7.1);
\draw[thick] (0.5,-6.86) circle(0.1);
\end{scope}

\filldraw[black] (2,-9) circle (2pt) node[anchor=east]{};
\filldraw[black] (2,-8.5) circle (2pt) node[anchor=east]{};
\filldraw[black] (2,-8) circle (2pt) node[anchor=east]{};
\filldraw[black] (2,-7.5) circle (2pt) node[anchor=east]{};
\filldraw[black] (2,-7) circle (2pt) node[anchor=east]{};
\filldraw[black] (3,-9) circle (2pt) node[anchor=west]{};
\filldraw[black] (3,-8.5) circle (2pt) node[anchor=west]{};
\filldraw[black] (3,-8) circle (2pt) node[anchor=west]{};
\filldraw[black] (3,-7.5) circle (2pt) node[anchor=west]{};
\filldraw[black] (3,-7) circle (2pt) node[anchor=west]{};
\draw (2,-7) edge [-, bend left=0] (3,-9);
\draw (2,-8.5) edge [-, bend left=20] (2,-9);
\draw (3,-8.5) edge [-, bend left=30] (3,-7);
\draw (2,-7.5) edge [-, bend left=20] (2,-8);
\draw (3,-7.5) edge [-, bend right=20] (3,-8);
\draw [dashed] (1.88,-9.12) rectangle (3.12,-6.88);
\node at (2.5,-9.3) {$h_2h_1h_4h_3h_2$};
\begin{scope}
\clip (2.25,-6.885) rectangle (2.75,-7.1);
\draw[thick] (2.5,-6.86) circle(0.1);
\end{scope}

\filldraw[black] (4,-9) circle (2pt) node[anchor=east]{};
\filldraw[black] (4,-8.5) circle (2pt) node[anchor=east]{};
\filldraw[black] (4,-8) circle (2pt) node[anchor=east]{};
\filldraw[black] (4,-7.5) circle (2pt) node[anchor=east]{};
\filldraw[black] (4,-7) circle (2pt) node[anchor=east]{};
\filldraw[black] (5,-9) circle (2pt) node[anchor=west]{};
\filldraw[black] (5,-8.5) circle (2pt) node[anchor=west]{};
\filldraw[black] (5,-8) circle (2pt) node[anchor=west]{};
\filldraw[black] (5,-7.5) circle (2pt) node[anchor=west]{};
\filldraw[black] (5,-7) circle (2pt) node[anchor=west]{};
\draw (4,-7) edge [-, bend left=0] (5,-8);
\draw (4,-8.5) edge [-, bend left=20] (4,-9);
\draw (5,-7.5) edge [-, bend left=20] (5,-7);
\draw (4,-7.5) edge [-, bend left=20] (4,-8);
\draw (5,-9) edge [-, bend left=20] (5,-8.5);
\draw [dashed] (3.88,-9.12) rectangle (5.12,-6.88);
\node at (4.5,-9.3) {$h_2h_1h_4$};
\begin{scope}
\clip (4.25,-6.885) rectangle (4.75,-7.1);
\draw[thick] (4.5,-6.86) circle(0.1);
\end{scope}

\filldraw[black] (6,-9) circle (2pt) node[anchor=east]{};
\filldraw[black] (6,-8.5) circle (2pt) node[anchor=east]{};
\filldraw[black] (6,-8) circle (2pt) node[anchor=east]{};
\filldraw[black] (6,-7.5) circle (2pt) node[anchor=east]{};
\filldraw[black] (6,-7) circle (2pt) node[anchor=east]{};
\filldraw[black] (7,-9) circle (2pt) node[anchor=west]{};
\filldraw[black] (7,-8.5) circle (2pt) node[anchor=west]{};
\filldraw[black] (7,-8) circle (2pt) node[anchor=west]{};
\filldraw[black] (7,-7.5) circle (2pt) node[anchor=west]{};
\filldraw[black] (7,-7) circle (2pt) node[anchor=west]{};
\draw (6,-7) edge [-, bend left=0] (7,-7);
\draw (6,-8.5) edge [-, bend left=20] (6,-9);
\draw (7,-8) edge [-, bend left=20] (7,-7.5);
\draw (6,-7.5) edge [-, bend left=20] (6,-8);
\draw (7,-9) edge [-, bend left=20] (7,-8.5);
\draw [dashed] (5.88,-9.12) rectangle (7.12,-6.88);
\node at (6.5,-9.3) {$h_2h_4$};
\begin{scope}
\clip (6.25,-6.885) rectangle (6.75,-7.1);
\draw[thick] (6.5,-6.86) circle(0.1);
\end{scope}

\filldraw[black] (8,-9) circle (2pt) node[anchor=east]{};
\filldraw[black] (8,-8.5) circle (2pt) node[anchor=east]{};
\filldraw[black] (8,-8) circle (2pt) node[anchor=east]{};
\filldraw[black] (8,-7.5) circle (2pt) node[anchor=east]{};
\filldraw[black] (8,-7) circle (2pt) node[anchor=east]{};
\filldraw[black] (9,-9) circle (2pt) node[anchor=west]{};
\filldraw[black] (9,-8.5) circle (2pt) node[anchor=west]{};
\filldraw[black] (9,-8) circle (2pt) node[anchor=west]{};
\filldraw[black] (9,-7.5) circle (2pt) node[anchor=west]{};
\filldraw[black] (9,-7) circle (2pt) node[anchor=west]{};
\draw (8,-7) edge [-, bend left=0] (9,-7);
\draw (8,-8.5) edge [-, bend left=20] (8,-9);
\draw (9,-7.5) edge [-, bend right=30] (9,-9);
\draw (8,-7.5) edge [-, bend left=20] (8,-8);
\draw (9,-8) edge [-, bend right=20] (9,-8.5);
\draw [dashed] (7.88,-9.12) rectangle (9.12,-6.88);
\node at (8.5,-9.3) {$h_2h_4h_3$};
\begin{scope}
\clip (8.25,-6.885) rectangle (8.75,-7.1);
\draw[thick] (8.5,-6.86) circle(0.1);
\end{scope}

\filldraw[black] (0,-12) circle (2pt) node[anchor=east]{};
\filldraw[black] (0,-11.5) circle (2pt) node[anchor=east]{};
\filldraw[black] (0,-11) circle (2pt) node[anchor=east]{};
\filldraw[black] (0,-10.5) circle (2pt) node[anchor=east]{};
\filldraw[black] (0,-10) circle (2pt) node[anchor=east]{};
\filldraw[black] (1,-12) circle (2pt) node[anchor=west]{};
\filldraw[black] (1,-11.5) circle (2pt) node[anchor=west]{};
\filldraw[black] (1,-11) circle (2pt) node[anchor=west]{};
\filldraw[black] (1,-10.5) circle (2pt) node[anchor=west]{};
\filldraw[black] (1,-10) circle (2pt) node[anchor=west]{};
\draw (0,-10) edge [-, bend left=0] (1,-12);
\draw (0,-10.5) edge [-, bend left=30] (0,-12);
\draw (1,-11.5) edge [-, bend left=20] (1,-11);
\draw (0,-11.5) edge [-, bend right=20] (0,-11);
\draw (1,-10.5) edge [-, bend left=20] (1,-10);
\draw [dashed] (-.12,-12.12) rectangle (1.12,-9.88);
\node at (.5,-12.3) {$h_3h_2h_1h_4h_3$};
\begin{scope}
\clip (0.25,-9.885) rectangle (0.75,-10.1);
\draw[thick] (0.5,-9.86) circle(0.1);
\end{scope}

\filldraw[black] (2,-12) circle (2pt) node[anchor=east]{};
\filldraw[black] (2,-11.5) circle (2pt) node[anchor=east]{};
\filldraw[black] (2,-11) circle (2pt) node[anchor=east]{};
\filldraw[black] (2,-10.5) circle (2pt) node[anchor=east]{};
\filldraw[black] (2,-10) circle (2pt) node[anchor=east]{};
\filldraw[black] (3,-12) circle (2pt) node[anchor=west]{};
\filldraw[black] (3,-11.5) circle (2pt) node[anchor=west]{};
\filldraw[black] (3,-11) circle (2pt) node[anchor=west]{};
\filldraw[black] (3,-10.5) circle (2pt) node[anchor=west]{};
\filldraw[black] (3,-10) circle (2pt) node[anchor=west]{};
\draw (2,-10) edge [-, bend left=0] (3,-12);
\draw (2,-10.5) edge [-, bend left=30] (2,-12);
\draw (3,-11.5) edge [-, bend left=30] (3,-10);
\draw (2,-11.5) edge [-, bend right=20] (2,-11);
\draw (3,-10.5) edge [-, bend right=20] (3,-11);
\draw [dashed] (1.88,-12.12) rectangle (3.12,-9.88);
\node at (2.5,-12.3) {$h_3h_2h_1h_4h_3h_2$};
\begin{scope}
\clip (2.25,-9.885) rectangle (2.75,-10.1);
\draw[thick] (2.5,-9.86) circle(0.1);
\end{scope}

\filldraw[black] (4,-12) circle (2pt) node[anchor=east]{};
\filldraw[black] (4,-11.5) circle (2pt) node[anchor=east]{};
\filldraw[black] (4,-11) circle (2pt) node[anchor=east]{};
\filldraw[black] (4,-10.5) circle (2pt) node[anchor=east]{};
\filldraw[black] (4,-10) circle (2pt) node[anchor=east]{};
\filldraw[black] (5,-12) circle (2pt) node[anchor=west]{};
\filldraw[black] (5,-11.5) circle (2pt) node[anchor=west]{};
\filldraw[black] (5,-11) circle (2pt) node[anchor=west]{};
\filldraw[black] (5,-10.5) circle (2pt) node[anchor=west]{};
\filldraw[black] (5,-10) circle (2pt) node[anchor=west]{};
\draw (4,-10) edge [-, bend left=0] (5,-11);
\draw (4,-10.5) edge [-, bend left=30] (4,-12);
\draw (5,-10.5) edge [-, bend left=20] (5,-10);
\draw (4,-11.5) edge [-, bend right=20] (4,-11);
\draw (5,-12) edge [-, bend left=20] (5,-11.5);
\draw [dashed] (3.88,-12.12) rectangle (5.12,-9.88);
\node at (4.5,-12.3) {$h_3h_2h_1h_4$};
\begin{scope}
\clip (4.25,-9.885) rectangle (4.75,-10.1);
\draw[thick] (4.5,-9.86) circle(0.1);
\end{scope}

\filldraw[black] (6,-12) circle (2pt) node[anchor=east]{};
\filldraw[black] (6,-11.5) circle (2pt) node[anchor=east]{};
\filldraw[black] (6,-11) circle (2pt) node[anchor=east]{};
\filldraw[black] (6,-10.5) circle (2pt) node[anchor=east]{};
\filldraw[black] (6,-10) circle (2pt) node[anchor=east]{};
\filldraw[black] (7,-12) circle (2pt) node[anchor=west]{};
\filldraw[black] (7,-11.5) circle (2pt) node[anchor=west]{};
\filldraw[black] (7,-11) circle (2pt) node[anchor=west]{};
\filldraw[black] (7,-10.5) circle (2pt) node[anchor=west]{};
\filldraw[black] (7,-10) circle (2pt) node[anchor=west]{};
\draw (6,-10) edge [-, bend left=0] (7,-10);
\draw (6,-10.5) edge [-, bend left=30] (6,-12);
\draw (7,-11) edge [-, bend left=20] (7,-10.5);
\draw (6,-11.5) edge [-, bend right=20] (6,-11);
\draw (7,-12) edge [-, bend left=20] (7,-11.5);
\draw [dashed] (5.88,-12.12) rectangle (7.12,-9.88);
\node at (6.5,-12.3) {$h_3h_2h_4$};
\begin{scope}
\clip (6.25,-9.885) rectangle (6.75,-10.1);
\draw[thick] (6.5,-9.86) circle(0.1);
\end{scope}

\filldraw[black] (8,-12) circle (2pt) node[anchor=east]{};
\filldraw[black] (8,-11.5) circle (2pt) node[anchor=east]{};
\filldraw[black] (8,-11) circle (2pt) node[anchor=east]{};
\filldraw[black] (8,-10.5) circle (2pt) node[anchor=east]{};
\filldraw[black] (8,-10) circle (2pt) node[anchor=east]{};
\filldraw[black] (9,-12) circle (2pt) node[anchor=west]{};
\filldraw[black] (9,-11.5) circle (2pt) node[anchor=west]{};
\filldraw[black] (9,-11) circle (2pt) node[anchor=west]{};
\filldraw[black] (9,-10.5) circle (2pt) node[anchor=west]{};
\filldraw[black] (9,-10) circle (2pt) node[anchor=west]{};
\draw (8,-10) edge [-, bend left=0] (9,-10);
\draw (8,-10.5) edge [-, bend left=30] (8,-12);
\draw (9,-10.5) edge [-, bend right=30] (9,-12);
\draw (8,-11.5) edge [-, bend right=20] (8,-11);
\draw (9,-11) edge [-, bend right=20] (9,-11.5);
\draw [dashed] (7.88,-12.12) rectangle (9.12,-9.88);
\node at (8.5,-12.3) {$h_3h_2h_4h_3$};
\begin{scope}
\clip (8.25,-9.885) rectangle (8.75,-10.1);
\draw[thick] (8.5,-9.86) circle(0.1);
\end{scope}
\end{tikzpicture}
\end{center}
    \caption{The elements of the $\J$-class $A_3$}
     \label{fig:A3}
\end{figure}
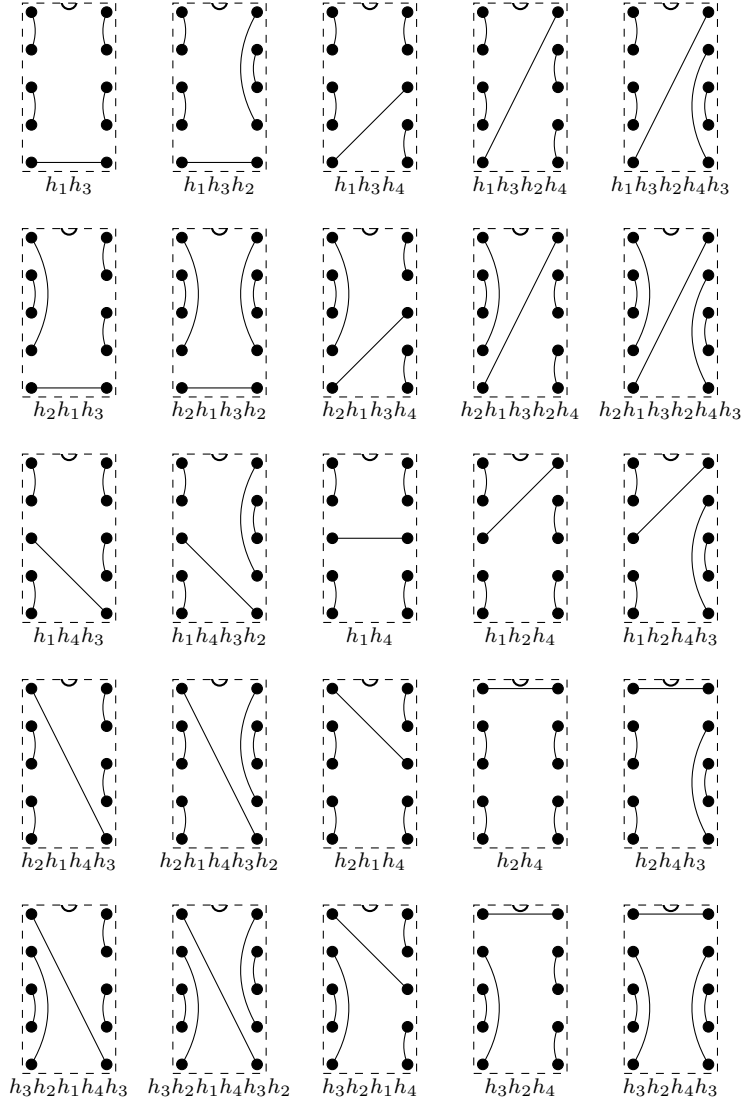

\begin{lem}\label{R_non-eq}
Let $2< n_1,n_2$ and let $a=a_1\cdots a_{n_1}$ and $b=b_1\cdots b_{n_2}$, for some elements $a_1,\ldots,a_{n_1},b_1,\ldots,b_{n_2}\in \{h_1,h_2,h_3,h_4\}$, such that $a_1\cdots a_{n_1-1}\in A_2$, $b_1\cdots b_{n_2-1}\in A_2$ and $a_{1}=b_{1}$.
If one of the following conditions hold:
\begin{enumerate}
\item[$(\ast1)$] $\{a_{n_1-1},a_{n_1}\}=\{h_1,h_3\}$ and $\{b_{n_2-1},b_{n_2}\}=\{h_2,h_4\}$;
\item[$(\ast2)$] $a_{n_1-1}=b_{n_2-1}=h_4$ and $\{a_{n_1},b_{n_2}\}=\{h_1,h_2\}$;
\item[$(\ast3)$] $a_{n_1-1}=b_{n_2-1}=h_1$ and $\{a_{n_1},b_{n_2}\}=\{h_3,h_4\}$;
\item[$(\ast4)$] $a_{n_1-1}=b_{n_2}=h_4$, $a_{n_1}=h_1$ and $b_{n_2-1}=h_2$;
\item[$(\ast5)$] $a_{n_1-1}=b_{n_2}=h_1$, $a_{n_1}=h_4$ and $b_{n_2-1}=h_3$,
\end{enumerate}
then the elements $a$ and $b$ are not in the same $\R$-class.

Also, if one of the following conditions hold:
\begin{enumerate}
\item[$(\#1)$] $\{a_{n_1-1},b_{n_2-1}\}=\{h_1,h_2\}$ and $a_{n_1}=b_{n_2}=h_4$;
\item[$(\#2)$] $\{a_{n_1-1},b_{n_2-1}\}=\{h_3,h_4\}$ and $a_{n_1}=b_{n_2}=h_1$;
\item[$(\#3)$] $a_{n_1-1}=b_{n_2}=h_4$, $a_{n_1}=h_2$ and $b_{n_2-1}=h_1$;
\item[$(\#4)$] $a_{n_1-1}=b_{n_2}=h_1$, $a_{n_1}=h_3$ and $b_{n_2-1}=h_4$,
\end{enumerate}
then the elements $a$ and $b$ are in the same $\R$-class.
\end{lem}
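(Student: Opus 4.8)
The plan is to reduce the whole statement to the combinatorics of the left‑pin partitions of diagrams. First I would record the fact, readable off Figures~\ref{fig:A2} and~\ref{fig:A3}, that for elements of $\mathcal{J}_5$ in one $\J$-class, being $\R$-related is equivalent to having the same $l$-wires (equivalently, inducing the same partition of the five left pins): right multiplication can only merge blocks of the left partition, so this partition is constant on each $\R$-class, and since the four $\R$-classes of $A_2$ realise the four distinct left patterns $\{1,2\},\{2,3\},\{3,4\},\{4,5\}$ and the five $\R$-classes of $A_3$ realise five distinct left patterns, the invariant separates $\R$-classes in each case. A second preliminary is a normal form for $A_2$: an element of $A_2$ is determined by the pair $(p,q)\in\{1,2,3,4\}^2$ giving its left block $\{p,p+1\}$ and its right block $\{q,q+1\}$, the three remaining left pins being joined to the three remaining right pins in the unique planar manner.

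Next I would identify the $A_2$-prefixes occurring in the statement. Since $A_3$ is the minimal $\J$-class, if $w=h_{i_1}\cdots h_{i_m}\in A_2$ then every prefix of $w$ lies in $A_2$; as the $l$-wire of a left factor always survives in a product, induction gives that the $l$-wire of $w$ is $\{i_1,i_1+1\}$, and applying to the reversed word the left–right reflection anti-automorphism of $\mathcal{J}_5$ (which fixes each $h_i$ and swaps $l$-wires with $r$-wires) shows the $r$-wire of $w$ is $\{i_m,i_m+1\}$. Hence $a_1\cdots a_{n_1-1}$ is precisely the element $(p,q)$ of $A_2$ with $h_p=a_1$, $h_q=a_{n_1-1}$, and similarly for $b$; as $a_1=b_1$, the two prefixes share the same $p$.

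The technical core is then the product $(p,q)\cdot h_k$. Writing $L_1<L_2<L_3$ for the left pins outside $\{p,p+1\}$ and $R_1<R_2<R_3$ for the right pins outside $\{q,q+1\}$ (so the prefix carries $t$-wires $L_s\leftrightarrow R_s$), a direct wire chase shows: if $\{k,k+1\}\cap\{q,q+1\}\neq\emptyset$ the product remains in $A_2$, whereas if $\{k,k+1\}\cap\{q,q+1\}=\emptyset$ — which is exactly what is forced by each of $(\ast1)$–$(\ast5)$ and $(\#1)$–$(\#4)$ — the $l$-wire $\{k,k+1\}$ of $h_k$ pulls back through the prefix to a new $l$-wire $\{L_i,L_{i+1}\}$, where $i\in\{1,2\}$ is the position of $k$ in the sorted list $R_1<R_2<R_3$ (explicitly $i=k$ if $k<q$, and $i=k-2$ if $k\geq q+2$). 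Consequently the left partition of $a$ is $\{\{p,p+1\},\{L_i,L_{i+1}\},\{L_j\}\}$ with $\{i,i+1,j\}=\{1,2,3\}$, so the $\R$-class of $a$ depends only on $p$ and on $i=i(q,k)$. I expect this wire chase — verifying that no further $l$-wire is created and that the loop/absorption cases behave as claimed — to be the main obstacle; everything after it is bookkeeping.

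Finally I would tabulate $i(q,k)$ on the finitely many pairs arising: $i=1$ for $(q,k)\in\{(1,3),(3,1),(4,1)\}$ and $i=2$ for $(q,k)\in\{(1,4),(2,4),(4,2)\}$. For $a$ and $b$ as in the lemma the value of $p$ is common, hence $a\R b$ if and only if $i(q_a,k_a)=i(q_b,k_b)$. Checking each hypothesis: under $(\ast1)$–$(\ast5)$ the two values are $1$ and $2$ in some order (so $a$ and $b$ have left partitions $\{\{p,p+1\},\{L_1,L_2\},\{L_3\}\}$ and $\{\{p,p+1\},\{L_2,L_3\},\{L_1\}\}$, which differ), whence $a\not\R b$; under $(\#1)$ and $(\#3)$ both values equal $2$, and under $(\#2)$ and $(\#4)$ both values equal $1$, so $a$ and $b$ have the same left partition and $a\R b$. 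This completes the proof.
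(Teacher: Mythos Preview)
Your argument is correct and takes a genuinely different route from the paper. The paper proves the lemma by brute force: for each of the nine conditions it runs through the four possible values of $a_1=b_1$, simplifies $a$ and $b$ explicitly via the defining relations of $\mathcal{J}_5$, and then reads off whether the two resulting elements lie in the same row of Figure~\ref{fig:A3}. Your approach instead works structurally with the diagram model: you identify the $\R$-class invariant (the left-pin partition), parametrise $A_2$ by the pair $(p,q)$ of its $l$- and $r$-wire, show that the prefix $a_1\cdots a_{n_1-1}$ has parameters $(p,q)$ with $h_p=a_1$ and $h_q=a_{n_1-1}$, and then carry out a single wire computation for $(p,q)\cdot h_k$ which reduces the whole question to the value of $i(q,k)\in\{1,2\}$. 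The paper's approach has the advantage of being entirely elementary, requiring nothing beyond the relations~(\ref{Jn-relations}) and the table of $A_3$; yours avoids the $\sim 36$ separate reductions, explains \emph{why} the $(\ast)$/$(\#)$ dichotomy occurs (it is exactly whether the new $l$-wire is $\{L_1,L_2\}$ or $\{L_2,L_3\}$), and yields Lemma~\ref{L_non-eq} immediately by the left--right symmetry you already invoke.
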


\begin{proof}
\noindent $(\ast1)\colon$
We have the following cases subject to the element $a_{1}$:
\begin{enumerate}
\item $(a_{1}=h_1)\colon$
since $h_1h_2h_3h_1=h_1h_2h_1h_3=h_1h_3$, we have $a=h_1h_3$. Also, as $h_1h_2h_3h_4h_2=h_1h_2h_3h_2h_4=h_1h_2h_4$, we have $b=h_1h_2h_4$. Now, as $h_1h_3$ and $h_1h_2h_4$ are not in the same $\R$-class, the elements $a$ and $b$ are not in the same $\R$-class.
\item $(a_{1}=h_2)\colon$ 
we have $a=h_2h_3h_1$. Also, as $h_2h_3h_4h_2=h_2h_3h_2h_4=h_2h_4$, we have $b=h_2h_4$. Now, as $h_2h_3h_1$ and $h_2h_4$ are not in the same $\R$-class, the result follows.
\item $(a_{1}=h_3)\colon$
since $h_3h_2h_1h_3=h_3h_2h_3h_1=h_1h_3$, we have $a=h_1h_3$. Also, we have $b=h_3h_2h_4$. Now, as $h_1h_3$ and $h_3h_2h_4$ are not in the same $\R$-class, the result follows.
\item $(a_{1}=h_4)\colon$
since $h_4h_3h_2h_1h_3=h_4h_3h_2h_3h_1=h_4h_3h_1=h_4h_3h_1$, we have $a=h_4h_3h_1$. Also, as $h_4h_3h_2h_4=h_4h_3h_4h_2=h_4h_2$, we have $b=h_4h_2$. Now, as $h_4h_3h_1$ and $h_4h_2$ are not in the same $\R$-class, the elements $a$ and $b$ are not in the same $\R$-class.
\end{enumerate}

\noindent $(\ast2)\colon$
If $a_{1}=h_4$, then we have $a_1\cdots a_{n_1-1}=b_1\cdots b_{n_2-1}=h_4$. As, $h_1h_4$ and $h_2h_4$ are not in the same $\R$-class, the elements $a$ and $b$ are not in the same $\R$-class.
If $a_{1}\neq h_4$, then there exist integers $1\leq i_1< n_1-1$ and $1\leq i_2< n_2-1$ such that $a_{i_1}=h_3$, $b_{i_2}=h_3$ and
$a_{i_1+1}\cdots a_{n_1-1}=b_{i_2+1}\cdots b_{n_2-1}=h_4$. Then, we have 
$\{a_{i_1}\cdots a_{n_1},b_{i_2}\cdots b_{n_2}\}=\{h_3h_4h_1,h_3h_4h_2\}=\{h_3h_1h_4,h_3h_4h_2\}$.
Now, by the previous part, the result follows.

\noindent $(\ast3)\colon$ Similarly the previous part, the elements $a$ and $b$ are not in the same $\R$-class.

\noindent $(\ast4)\colon$
we have the following cases subject to the element $a_{1}$:
\begin{enumerate}
\item $(a_{1}=h_1)\colon$
we have $a=h_1h_2h_3h_4h_1=h_1h_2h_3h_1h_4=h_1h_2h_1h_3h_4=h_1h_3h_4$ and $b=h_1h_2h_4$. Now, as $h_1h_3h_4$ and $h_1h_2h_4$ are not in the same $\R$-class, the elements $a$ and $b$ are not in the same $\R$-class.  
\item $(a_{1}=h_2)\colon$ 
we have $a=h_2h_3h_4h_1$ and $b=h_2h_4$. Now, as $h_2h_3h_4h_1$ and $h_2h_4$ are not in the same $\R$-class, the elements $a$ and $b$ are not in the same $\R$-class. 
\item $(a_{1}=h_3)\colon$
we have $a=h_3h_4h_1$ and $b=h_3h_2h_4$. Now, as $h_3h_4h_1$ and $h_3h_2h_4$ are not in the same $\R$-class, the elements $a$ and $b$ are not in the same $\R$-class.  
\item $(a_{1}=h_4)\colon$
we have $a=h_4h_1$ and $b=h_4h_3h_2h_4=h_4h_3h_4h_2=h_4h_2$. Now, as $h_4h_1$ and $h_4h_2$ are not in the same $\R$-class, the elements $a$ and $b$ are not in the same $\R$-class. 
\end{enumerate}

\noindent $(\ast5)\colon$
Similarly Part $(\ast4)$, the elements $a$ and $b$ are not in the same $\R$-class.

\noindent $(\#1)\colon$
By symmetry, we may assume that $a_{n_1-1}=h_1$ and $b_{n_2-1}=h_2$.
We have the following cases subject to the element $a_{1}$:
\begin{enumerate}
\item $(a_{1}=h_1)\colon$
we have $a=h_1h_4$ and $b=h_1h_2h_4$. Now, as $h_1h_4$ and $h_1h_2h_4$ are in the same $\R$-class, the elements $a$ and $b$ are in the same $\R$-class. 
\item $(a_{1}=h_2)\colon$ 
we have $a=h_2h_1h_4$ and $b=h_2h_4$. Now, as $h_2h_1h_4$ and $h_2h_4$ are in the same $\R$-class, the result follows.
\item $(a_{1}=h_3)\colon$
we have $a=h_3h_2h_1h_4$ and $b=h_3h_2h_4$. Now, as $h_3h_2h_1h_4$ and $h_3h_2h_4$ are in the same $\R$-class, the result follows.
\item $(a_{1}=h_4)\colon$
since $h_4h_3h_2h_1h_4=h_4h_3h_2h_4h_1=h_4h_3h_4h_2h_1=h_4h_2h_1$, we have $a=h_4h_2h_1$. Also, as $h_4h_3h_2h_4=h_4h_3h_4h_2=h_4h_2$, we have $b=h_4h_2$. Now, as $h_4h_2h_1$ and $h_4h_2$ are in the same $\R$-class, the elements $a$ and $b$ are in the same $\R$-class.
\end{enumerate}

Similarly Part $(\#1)$, other Parts $(\#2)$, $(\#3)$ and $(\#4)$ hold.
\end{proof}

Similarly Lemma~\ref{R_non-eq}, we have the following lemma.
 
\begin{lem}\label{L_non-eq}
Let $2< n_1,n_2$ and let $a=a_1\cdots a_{n_1}$ and $b=b_1\cdots b_{n_2}$, for some elements $a_1,\ldots,a_{n_1},b_1,\ldots,b_{n_2}\in \{h_1,h_2,h_3,h_4\}$, such that $a_2\cdots a_{n_1}\in A_2$, $b_2\cdots b_{n_2}\in A_2$ and $a_{n_1}=b_{n_2}$.
If one of the following conditions hold:
\begin{enumerate}
\item[$(\ast1)$] $\{a_1,a_2\}=\{h_1,h_3\}$ and $\{b_1,b_2\}=\{h_2,h_4\}$;
\item[$(\ast2)$] $\{a_1,b_1\}=\{h_1,h_2\}$ and $a_2=b_2=h_4$;
\item[$(\ast3)$] $\{a_1,b_1\}=\{h_3,h_4\}$ and $a_2=b_2=h_1$;
\item[$(\ast4)$] $a_1=b_2=h_4$, $a_2=h_2$ and $b_1=h_1$;
\item[$(\ast5)$] $a_1=b_2=h_1$, $a_2=h_3$ and $b_1=h_4$,
\end{enumerate}
then the elements $a$ and $b$ are not in the same $\eL$-class.

Also, if one of the following conditions hold:
\begin{enumerate}
\item[$(\#1)$] $a_{1}=b_{1}=h_4$ and $\{a_{2},b_{2}\}=\{h_1,h_2\}$;
\item[$(\#2)$] $a_{1}=b_{1}=h_1$ and $\{a_{2},b_{2}\}=\{h_3,h_4\}$;
\item[$(\#3)$] $a_1=b_2=h_4$, $a_2=h_1$ and $b_1=h_2$;
\item[$(\#4)$] $a_1=b_2=h_1$, $a_2=h_4$ and $b_1=h_3$,
\end{enumerate}
then the elements $a$ and $b$ are in the same $\eL$-class.
\end{lem}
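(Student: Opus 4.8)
The plan is to deduce Lemma~\ref{L_non-eq} from Lemma~\ref{R_non-eq} by exploiting the left--right symmetry of the Jones monoid, rather than repeating the case analysis. Reflecting a wire diagram along the vertical axis, that is, swapping the left pins with the right pins, is an anti-automorphism $\tau$ of $\mathcal{J}_n$ (a bijection with $\tau(xy)=\tau(y)\tau(x)$): this follows from the definition of multiplication of wire diagrams, since connecting the right pins of $\pi$ to the left pins of $\sigma$ becomes, after reflection, connecting the right pins of $\tau(\sigma)$ to the left pins of $\tau(\pi)$. The diagram of each generator $h_i$ is symmetric under this reflection, so $\tau(h_i)=h_i$, and hence $\tau(x_1\cdots x_k)=x_k\cdots x_1$ for all $x_1,\dots,x_k\in\{h_1,\dots,h_{n-1}\}$. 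Being an anti-automorphism, $\tau$ interchanges left and right principal ideals, so it carries each $\R$-class onto an $\eL$-class and conversely; in particular $x\eL y$ if and only if $\tau(x)\R\tau(y)$. Finally $\tau$ permutes the $\J$-classes and fixes $1$, and since $\tau(h_1)=h_1\in A_2$ it fixes $A_2$ (and hence $A_3$) setwise.

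Given $a=a_1\cdots a_{n_1}$ and $b=b_1\cdots b_{n_2}$ as in the statement, I would apply Lemma~\ref{R_non-eq} to the words $a':=\tau(a)=a_{n_1}\cdots a_1$ and $b':=\tau(b)=b_{n_2}\cdots b_1$. Writing $a'=a'_1\cdots a'_{n_1}$ with $a'_k=a_{n_1+1-k}$, and similarly for $b'$, the hypotheses of Lemma~\ref{R_non-eq} hold: $a'_1\cdots a'_{n_1-1}=\tau(a_2\cdots a_{n_1})\in A_2$ since $\tau$ fixes $A_2$ and $a_2\cdots a_{n_1}\in A_2$, likewise $b'_1\cdots b'_{n_2-1}\in A_2$, and $a'_1=a_{n_1}=b_{n_2}=b'_1$. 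Under the translation $a'_{n_1-1}=a_2$, $a'_{n_1}=a_1$, $b'_{n_2-1}=b_2$, $b'_{n_2}=b_1$, each condition $(\ast i)$ and $(\#i)$ of Lemma~\ref{L_non-eq} becomes the condition $(\ast i)$, respectively $(\#i)$, of Lemma~\ref{R_non-eq} for the pair $(a',b')$, possibly after interchanging the roles of $a'$ and $b'$; the interchange is needed exactly in $(\ast4)$, $(\ast5)$, $(\#3)$, $(\#4)$ (where the distinguished positions $n-1$ and $n$ are shared between $a$ and $b$), and it is harmless because both conclusions ``are / are not in the same $\R$-class'' are symmetric in their two arguments. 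Consequently, for the conditions $(\ast1)$--$(\ast5)$ the elements $a'$ and $b'$ are not $\R$-related, hence $a$ and $b$ are not in the same $\eL$-class; for $(\#1)$--$(\#4)$ we get $a'\R b'$, hence $a\eL b$.

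The only genuine work is the bookkeeping of the translation in the second paragraph: for each of the nine conditions one must check a short list of equalities among the letters $a_1,a_2,b_1,b_2$ (after word reversal) to see that it matches a listed condition of Lemma~\ref{R_non-eq}. This is purely mechanical, and the main point to state carefully is the occasional swap $a'\leftrightarrow b'$ described above. No semigroup-theoretic input is needed beyond the existence of the reflection anti-automorphism $\tau$ and Lemma~\ref{R_non-eq} itself.
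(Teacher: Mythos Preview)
Your proof is correct and matches the paper's approach: the paper gives no proof of Lemma~\ref{L_non-eq} beyond the sentence ``Similarly Lemma~\ref{R_non-eq}, we have the following lemma,'' which is precisely the left--right symmetry you make explicit via the reflection anti-automorphism $\tau$. Your bookkeeping is accurate, including the observation that the swap $a'\leftrightarrow b'$ is needed exactly in cases $(\ast4)$, $(\ast5)$, $(\#3)$, $(\#4)$.
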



\section{Characterization of the identities of the monoid $\mathcal{J}_5$}

Let $w$ and $v$ be words of $X^*$.

\begin{lem}\label{c-w-v}
If $w \JJJ v$, then we have $c(w)=c(v)$.
\end{lem}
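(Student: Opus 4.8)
The plan is to argue by contradiction using a content-collapsing substitution, which is the standard device for proving that a non-trivial monoid forces identities to preserve content. Suppose $c(w)\neq c(v)$. Without loss of generality there is a letter $x\in X$ that occurs in $w$ but not in $v$ (the other case is symmetric, swapping the roles of $w$ and $v$). I would then define the homomorphism $\phi\colon X^*\rightarrow\mathcal{J}_5$ by setting $\phi(x)=h_1$ and $\phi(y)=1$ for every letter $y\neq x$; since $X^*$ is free on $X$, this rule extends uniquely to a homomorphism.

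Next I would evaluate both sides under $\phi$. Write $w=w_0xw_1x\cdots xw_k$ with $k\geq 1$, where each $w_i\in X^*$ does not contain $x$; then $\phi(w)=\phi(x)^k=h_1^k$, and the relation $h_1^2=h_1$ gives $\phi(w)=h_1$, so the (positive) multiplicity of $x$ in $w$ is irrelevant. On the other hand $x\notin c(v)$, so $\phi(v)$ is a product of copies of the identity, whence $\phi(v)=1$.

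Finally I would invoke the structure of $\mathcal{J}_5$ recalled in the previous section: the identity $1$ is the unique element of the $\mathcal{J}$-class $A_1$, while $h_1\in A_2$, so $h_1\neq 1$. Therefore $\phi(w)\neq\phi(v)$, contradicting the assumption $w\JJJ v$. Hence $c(w)=c(v)$. There is really no obstacle here: the only algebraic input is the idempotency relation $h_1^2=h_1$, and the only fact about $\mathcal{J}_5$ used is the (evident) observation that $h_1$ is not the monoid identity.
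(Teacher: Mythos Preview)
Your proof is correct and follows the same overall strategy as the paper: assume a letter $x$ lies in $c(w)\setminus c(v)$, choose a substitution that distinguishes occurrences of $x$, and obtain $\phi(w)\neq\phi(v)$. The only difference is the specific substitution: the paper sends $x\mapsto h_3$ and every other letter to $h_1$, so that $\phi(v)=h_1\in A_2$ while $\phi(w)$ is either $h_3$ or lies in $A_3$; you instead send $x\mapsto h_1$ and every other letter to $1$, so that $\phi(v)=1$ while $\phi(w)=h_1$. Your choice is slightly more elementary, since it only needs $h_1\neq 1$ rather than any information about the $\mathscr{J}$-class structure, and it handles the edge case $v=1$ without comment.
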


\begin{proof}
Suppose the contrary, that there exists an letter $x\in c(w)\setminus c(v)$.
Substituting for all letters in $c(v)$ the value $h_1$, we obtain $v=h_1$. Substituting for all letters in $c(w)\setminus \{x\}$ the value $h_1$ and the letter $x$ the value $h_3$ we obtain $w= h_3$ or $w\in A_3$. Hence, $\mathcal{J}_5$ does not satisfy the identity $w= v$. 
Also, if there exists an letter $x\in c(v)\setminus c(w)$, we have a similar contradiction. Thus, we have $c(w)=c(v)$.
\end{proof}

By Lemma~\ref{c-w-v}, 
if $\mathcal{J}_5$ satisfies the identity $w=v$, then
$w=1$ if and only if $v=1$. Then, we assume that $w,v\neq 1$ and $c(w)=c(v)$. 

Let $Y$ be a non empty subset of $c(w)$. By Lemma~\ref{c-w-v}, we have $c(w_Y)= c(v_Y)$.
There exist letters $y_1, \ldots, y_r$, $z_1, \ldots, z_s$ in $Y$ 
such that 
$w_Y= y_1\cdots y_r$ and $v_Y= z_1\cdots z_s$. 

\begin{lem}\label{M-x_1=y_1-x_n=y_m}
If $w_Y\JJJ v_Y$, then we have $y_1=z_1$ and $y_{r}=z_{s}$.
\end{lem}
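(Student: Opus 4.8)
The plan is to prove the two equalities separately, each by contradiction: assuming that the first (respectively, last) letters disagree, I will exhibit a single homomorphism $\phi\colon X^*\to\mathcal{J}_5$ for which $\phi(w_Y)\neq\phi(v_Y)$, contradicting the hypothesis $w_Y\JJJ v_Y$. First I would record the elementary preliminary remarks: since $Y$ is a non-empty subset of $c(w)=c(v)$, every letter of $Y$ occurs in both $w$ and $v$, so $w_Y$ and $v_Y$ are non-empty words with content contained in $Y$; in particular $y_1,z_1,y_r,z_s$ are genuine letters of $Y$, and if $\abs{Y}=1$ the conclusion is trivial, so I may assume $\abs{Y}\geq 2$.

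For $y_1=z_1$, suppose $y_1\neq z_1$, and take $\phi$ sending $y_1$ to $h_1$ and every other letter of $X$ to $h_2$. Since all letters of $w_Y$ lie in $Y$ and $w_Y$ begins with $y_1$, the image $\phi(w_Y)$ is a non-empty product of the idempotents $h_1,h_2$ whose leftmost factor is $h_1$; likewise, as $v_Y$ begins with $z_1\neq y_1$, the image $\phi(v_Y)$ is such a product whose leftmost factor is $h_2$. The next step is the routine observation that, using only $h_1^2=h_1$, $h_2^2=h_2$, $h_1h_2h_1=h_1$ and $h_2h_1h_2=h_2$, every non-empty product over $\{h_1,h_2\}$ reduces to one of the four elements $h_1$, $h_2$, $h_1h_2$, $h_2h_1$, with the leftmost factor deciding between $\{h_1,h_1h_2\}$ and $\{h_2,h_2h_1\}$. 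Finally I would check that these four elements are pairwise distinct in $\mathcal{J}_5$, most transparently from their wire diagrams: $h_1$ and $h_1h_2$ carry the $l$-wire $\{1,2\}$ whereas $h_2$ and $h_2h_1$ carry the $l$-wire $\{2,3\}$, and within each pair the elements are separated by their $r$-wires. Hence $\phi(w_Y)\neq\phi(v_Y)$, the desired contradiction, and so $y_1=z_1$.

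For $y_r=z_s$, I would run the left--right mirror of this argument. If $y_r\neq z_s$, take $\psi$ sending $y_r$ to $h_1$ and every other letter to $h_2$; then $\psi(w_Y)$ is a non-empty product over $\{h_1,h_2\}$ whose rightmost factor is $h_1$, hence equals $h_1$ or $h_2h_1$, while $\psi(v_Y)$ has rightmost factor $h_2$, hence equals $h_2$ or $h_1h_2$. The same four elements are again pairwise distinct, this time distinguished by their $r$-wires ($h_1$ and $h_2h_1$ carry the $r$-wire $\{1',2'\}$, while $h_2$ and $h_1h_2$ carry the $r$-wire $\{2',3'\}$), so $\psi(w_Y)\neq\psi(v_Y)$, again a contradiction.

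I do not expect a genuine obstacle: the whole argument lives inside the submonoid of $\mathcal{J}_5$ generated by $h_1$ and $h_2$, and both ingredients --- the normal-form reduction of words over $\{h_1,h_2\}$ and the distinctness of the four resulting elements --- are immediate. The only points that call for a little care are the preliminary remark that $w_Y$ and $v_Y$ are non-empty and the explicit dismissal of the case $\abs{Y}=1$; in particular, neither Lemma~\ref{R_non-eq} nor Lemma~\ref{L_non-eq} is needed, since this two-generator sub-argument already separates the relevant $\R$- and $\eL$-classes.
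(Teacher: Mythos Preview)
Your proof is correct. It differs from the paper's argument in a pleasant way. The paper argues by substituting $y_1\mapsto a_1$ and $z_1\mapsto a_2$ for two elements $a_1,a_2\in A_3$ lying in different $\R$-classes (and dually for the last letters with $\eL$-classes); since $A_3$ is the minimum $\J$-class, stability forces $\phi(w_Y)\R a_1$ and $\phi(v_Y)\R a_2$, whence $\phi(w_Y)\neq\phi(v_Y)$. You instead stay inside the submonoid generated by $h_1,h_2$ (a copy of $\mathcal{J}_3$ sitting in $A_2\cup\{1\}$) and use the explicit normal form $\{h_1,h_2,h_1h_2,h_2h_1\}$, where first and last letters are visibly invariants. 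Your route is more elementary---it needs no Green's relations or stability, only the defining relations and the distinctness of four concrete diagrams---while the paper's route is terser and leans on the structural fact that the minimum ideal pins down $\R$- and $\eL$-classes from the first and last factors. Either argument is entirely adequate here; your observation that Lemmas~\ref{R_non-eq} and~\ref{L_non-eq} are unnecessary is accurate for both approaches.
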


\begin{proof}
Let $a_1,a_2\in A_3$ such that $a_1$ and $a_2$ are not in the same $\R$-class. Substituting the letter $y_1$ the value $a_1$, the letter $z_1$ the value $a_2$, we obtain that $w$ and $v$ are not in the same $\R$-class. Then, $\mathcal{J}_5$ does not satisfy the identity $w_Y=v_Y$, a contradiction.

Similarly, we have $y_{r}=z_{s}$.
\end{proof}

Throughout the remainder of this section before the main theorem, we assume that the following conditions hold for the subset $Y$ as follows:
\begin{enumerate}
\item $y_1=z_1$;
\item $y_{r}=z_{s}$.
\end{enumerate}

\begin{lem}\label{word-of-length-2}
If $w_Y\JJJ v_Y$, 
then each word of length 2 occurs in $w_Y$ if and only if it occurs in $v_Y$.
\end{lem}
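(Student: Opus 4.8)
I would argue by contradiction. Suppose a word of length $2$ occurs as a factor of exactly one of $w_Y$ and $v_Y$. The hypotheses on $w_Y,v_Y$ — namely $w_Y\JJJ v_Y$, the content equality $c(w_Y)=c(v_Y)$ (Lemma~\ref{c-w-v}), and the standing assumptions $y_1=z_1$, $y_r=z_s$ (see Lemma~\ref{M-x_1=y_1-x_n=y_m}) — are all symmetric in $w$ and $v$, as is the conclusion, so I may assume the offending word is $pq$ with $p,q\in Y$ (possibly $p=q$), that $pq$ occurs in $w_Y$, and that $pq$ does \emph{not} occur in $v_Y$; by Lemma~\ref{c-w-v} both $p$ and $q$ still occur in $v_Y$. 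It then suffices to produce a homomorphism $\varphi\colon X^*\to\mathcal J_5$ with $\varphi(w_Y)\neq\varphi(v_Y)$, since this contradicts $w_Y\JJJ v_Y$; and because $c(w_Y),c(v_Y)\subseteq Y$, only the values of $\varphi$ on the letters of $Y$ are relevant.

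The device that powers the whole argument is the following computation in $\mathcal J_5$: the elements $h_1h_2$ and $h_2h_3$ are idempotents, yet their product $h_1h_2\cdot h_2h_3=h_1h_2h_3$ is not idempotent, since $(h_1h_2h_3)^2=h_1h_3$ while $h_1h_2h_3\in A_2$ and $h_1h_3\in A_3$. So whenever a substitution arranges that the factor $pq$ is responsible for inserting the block $h_1h_2h_3$ (or its square) into a product, the $\mathscr{J}$-class of the value is forced down from $A_2$ to $A_3$, and that drop is what will separate $\varphi(w_Y)$ from $\varphi(v_Y)$.

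I would first treat the core case in which $Y=\{p,q\}$. Suppose $p\neq q$, so $w_Y,v_Y$ are words in $p,q$ with a common first letter, a common last letter, and both letters occurring. Since $pq$ is not a factor of $v_Y$, any $p$ in $v_Y$ is followed only by $p$ or by nothing, so $v_Y=q^bp^a$ with $a,b\geq1$; hence $v_Y$ begins with $q$ and ends with $p$, and therefore so does $w_Y$, say $w_Y=q^{c_1}p^{d_1}\cdots q^{c_k}p^{d_k}$ with all exponents $\geq 1$, where $k\geq 2$ precisely because $pq$ is a factor of $w_Y$. Put $\varphi(q)=h_1h_2$ and $\varphi(p)=h_2h_3$. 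Idempotency of $h_1h_2$ and $h_2h_3$ gives $\varphi(v_Y)=h_1h_2\cdot h_2h_3=h_1h_2h_3\in A_2$, whereas $\varphi(w_Y)=(h_1h_2\cdot h_2h_3)^k=(h_1h_2h_3)^k=h_1h_3\in A_3$ because $k\geq 2$; these lie in different $\mathscr{J}$-classes, so $\varphi(w_Y)\neq\varphi(v_Y)$. If instead $p=q$ and $Y=\{p\}$, then $v_Y=p$ and $w_Y=p^r$ with $r\geq 2$, and $\varphi(p)=h_1h_2h_3$ gives $\varphi(v_Y)=h_1h_2h_3\in A_2\neq h_1h_3=\varphi(w_Y)\in A_3$.

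The genuinely harder case is when $Y$ strictly contains $\{p,q\}$. One cannot simply delete the letters of $Y\setminus\{p,q\}$: a factor $pq$ of $w_Y$ need not survive such a deletion, and a spurious factor $pq$ may be created in $v_Y$, so the letters of $Y\setminus\{p,q\}$ must be retained as barriers. The plan is to keep $\varphi(p)=h_1h_2$, $\varphi(q)=h_2h_3$ and to send each $x\in Y\setminus\{p,q\}$ to a suitably chosen element — for instance one for which inserting $\varphi(x)$ between $\varphi(p)$ and $\varphi(q)$ restores idempotency, as in $h_1h_2\cdot h_4\cdot h_2h_3=h_1h_2h_4h_3$, which is an idempotent — and then to track how the non-idempotent block produced by a \emph{direct} occurrence of $pq$ persists through the full product. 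Where a bare computation becomes unwieldy, I would fall back on Lemmas~\ref{R_non-eq} and~\ref{L_non-eq}: one arranges the images so that $\varphi(w_Y)$ and $\varphi(v_Y)$ are products of the generators $h_1,\dots,h_4$ whose proper prefixes stay in $A_2$ and whose terminal letters realise one of the configurations $(\ast 1)$--$(\ast 5)$ there, whence $\varphi(w_Y)$ and $\varphi(v_Y)$ lie in distinct $\mathscr{R}$-classes (or, dually, distinct $\mathscr{L}$-classes). The main obstacle is exactly this last step: choosing the barrier images and controlling the $\mathscr{R}$- and $\mathscr{L}$-class of a long alternating product of $h_1h_2$, $h_2h_3$ and the barriers, so that the asymmetry ``$pq$ occurs in $w_Y$ but not in $v_Y$'' becomes an honest inequality in $\mathcal J_5$.
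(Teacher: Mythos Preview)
Your treatment of the base cases $Y=\{p,q\}$ (both $p\neq q$ and $p=q$) is correct and essentially matches the paper's argument. The genuine gap is the case where $Y$ properly contains $\{p,q\}$: you do not prove it, you only outline a strategy, and the outline does not work as stated. Your proposed barrier value $h_4$ does not keep $\varphi(v_Y)$ in $A_2$; for instance with $\varphi(q)=h_1h_2$ one already has $\varphi(qx)=h_1h_2\cdot h_4=h_1h_4h_2\in A_3$, so the mere presence of a barrier letter next to $q$ in $v_Y$ drops the value to $A_3$ and destroys the intended $A_2$/$A_3$ separation. The product $h_1h_2\cdot h_4\cdot h_2h_3=h_1h_2h_4h_3$ you compute is indeed idempotent, but it sits in $A_3$, which is exactly what you need to avoid; and the fallback to Lemmas~\ref{R_non-eq}/\ref{L_non-eq} that you invoke is never carried out.

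The paper closes this gap with a much simpler choice: send every letter of $Y\setminus\{p,q\}$ to $h_2$. With $\varphi(p)=h_3$, $\varphi(q)=h_1h_2$ (your $\varphi(p)=h_2h_3$ would work equally well) one checks directly, using $h_2h_3h_2=h_2$, $h_3h_2h_3=h_3$ and $h_2\cdot h_1h_2=h_2$, that any product of the images that avoids the block $\varphi(p)\varphi(q)$ stays in $A_2$; hence $\varphi(v_Y)\in A_2$ while $\varphi(w_Y)\in A_3$, and no $\mathscr R$/$\mathscr L$-class analysis is needed. For $p=q$ the paper uses $\varphi(p)=h_3h_2h_1$ and again sends all other letters to $h_2$, with the same effect via $h_3h_2h_1\cdot h_2=h_3h_2$. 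So the missing idea is simply the correct barrier $h_2$; once you have it, the general case is no harder than your base case.
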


\begin{proof}
We prove the result by contradiction.

Let $xy$ be a word of length 2 occurs in $w_Y$ and does not occur in $v_Y$. 
Since $xy$ occurs in $w_Y$, there exist letters $y_i,y_{i+1}\in Y$, for some integer $1\leq i\leq r-1$ such that $y_i=x$ and $y_{i+1}=y$.
We have two cases as follows:
\begin{enumerate}
\item[$(x\neq y)$:]
If $\abs{c(w_Y)}=2$, 
then there exists an integer $1\leq j<s$ such that $z_1=\cdots=z_j=y$ and $z_{j+1}=\cdots=z_s=x$.
Substituting the letter $y_i$ the value $h_3$ and the letter $y_{i+1}$ the value $h_1h_2$, we obtain that $w_Y\in A_3$, because $h_3h_1h_2\in A_3$. Now, as $v_Y=h_1h_2h_3\in A_2$, there is a contradiction.
Hence, we suppose that $\abs{c(w_Y)}> 2$.
Again, by substituting the letter $y_i$ the value $h_3$, the letter $y_{i+1}$ the value $h_1h_2$ and for all letters in $c(w_Y)\setminus\{y_i,y_{i+1}\}$ the value $h_2$, we again obtain that $w_Y\in A_3$. Also, since $h_2h_3h_2=h_2$, $h_3h_2h_3=h_3$, $zy=h_2h_1h_2=h_2$ for all $z\in c(w_Y)\setminus\{y_i,y_{i+1}\}$ and $xy$ does not occur in $v_Y$, one of the following conditions hold:\\
\noindent $(z_1=x)$: $v_Y=h_3h_2$ or $v_Y=h_3$;\\
\noindent $(z_1=y)$: $v_Y=h_1h_2$ or $v_Y=h_1h_2h_3$;\\
\noindent $(z_1\not\in\{x,y\})$: $v_Y=h_2h_3$ or $v_Y=h_2$.\\
Thus, we have $v_Y\in A_2$, a contradiction.
\item[$(x=y)$:]
First suppose that $\abs{c(w_Y)}=1$.
Substitute the letter $y_i$ the value $h_1h_2h_3$. We have $w_Y\in A_3$ and $v_Y\in A_2$, because $x^2$ does not occur in $v_Y$. A contradiction. Hence, we suppose that $\abs{c(w_Y)}> 1$.
Substituting the letter $y_i$ the value $h_3h_2h_1$ and for all letters in $c(w_Y)\setminus\{y_i\}$ the value $h_2$, we obtain that $w_Y\in A_3$, because $(h_3h_2h_1)^2\in A_3$. 
Also, since $h_3h_2h_1h_2=h_3h_2$ and $x^2$ does not occur in $v_Y$, one of the following conditions hold:\\
\noindent $(z_1\neq y_i)$: $v_Y=h_2h_1$ or $v_Y=h_2$;\\
\noindent $(z_1= y_i)$:] $v_Y=h_3h_2h_1$ or $v_Y=h_3h_2$.\\
Thus, we have $v_Y\in A_2$, a contradiction.
\end{enumerate}
\end{proof}

\begin{lem}\label{x,y,z,t-1}
Let $x,y,z,t\in Y$ with $xy\neq zt$.
Suppose that there exist words $w_1$, $w_2$, $v_1$ and $v_2$ in $Y^*$ 
such that $w_Y=w_1xyw_2$ and $v_Y=v_1ztv_2$.
If one of the following conditions hold:
\begin{enumerate}
\item if $xy$ and $zt$ do not occur in $w_1x$ and $v_1z$, and $x=z$;
\item if $y$ does not occur in $w_1x$ and $v_1z$, and $y=t$;
\item if $xy$ and $zt$ do not occur in $yw_2$ and $tv_2$, and $y=t$;
\item if $x$ does not occur in $yw_2$ and $tv_2$, and $x=z$,
\end{enumerate}
then $\mathcal{J}_5$ does not satisfy the identity $w_Y=v_Y$.
\end{lem}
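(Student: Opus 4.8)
The plan is to exploit the two preceding lemmas, Lemma~\ref{R_non-eq} and Lemma~\ref{L_non-eq}, by constructing a substitution that collapses $w_Y$ and $v_Y$ into products of $h_1,h_2,h_3,h_4$ whose penultimate-and-last letter pairs (or first-and-second letter pairs) fall into one of the "not in the same $\R$-class" (resp.\ "not in the same $\eL$-class") configurations listed there. Since $xy\neq zt$, the factorizations $w_Y=w_1xyw_2$ and $v_Y=v_1ztv_2$ genuinely differ in the distinguished positions, and one of the four hypotheses pins down exactly which coordinate ($x$ versus $z$, or $y$ versus $t$) is shared and which is forced to differ. I would handle the four cases in parallel, treating cases (1),(2) together (the discrepancy is "on the left/prefix side", so I aim for Lemma~\ref{R_non-eq}) and cases (3),(4) together (the discrepancy is "on the right/suffix side", so I aim for Lemma~\ref{L_non-eq}); the two halves are mirror images of one another under reversal of words.

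For concreteness take case (1): $x=z$, and neither $xy$ nor $zt$ occurs in $w_1x$ or $v_1z$. The idea is to substitute so that $w_1x$ and $v_1z$ both evaluate, up to absorbing idempotents, to the \emph{same} element $c$ beginning with a fixed generator $h_i$, while the extra letter of the pair $xy$ (resp.\ $zt$) tacks on exactly one further generator on the right, chosen so that the two possibilities for that last generator realize one of the patterns $(\ast1)$--$(\ast5)$ of Lemma~\ref{R_non-eq}. Concretely I would send $x=z$ to a word that is a maximal "staircase" $h_1h_2\cdots$ so that $w_1x$ and $v_1z$ land in $A_2$ with a prescribed final letter, send $y$ and $t$ to the two generators needed to complete a forbidden pair, and send every other letter of $c(w_Y)=c(v_Y)$ to an idempotent ($h_1$ or $h_2$) that is absorbed by the relations $h_ih_jh_i=h_i$ and $h_i^2=h_i$; the condition that $xy,zt$ do not occur in the prefixes $w_1x,v_1z$ is exactly what guarantees that the "completing" letter is not prematurely consumed, so that after evaluation $w_Y$ ends in one generator and $v_Y$ ends in another, with the common stem matching the hypothesis $a_1=b_1$ of Lemma~\ref{R_non-eq}. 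Then the part $yw_2$ (resp.\ $tv_2$), once evaluated, only multiplies on the right within or below $A_2$, so it cannot merge $w_Y$ and $v_Y$ back into a common $\R$-class; hence $\mathcal{J}_5$ fails $w_Y=v_Y$. Case (4) is the same construction read from the right, producing a failure of the $\eL$-analogue via Lemma~\ref{L_non-eq}, and cases (2),(3) are the variants where the \emph{shared} letter is $y=t$ rather than $x=z$.

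The main obstacle I anticipate is bookkeeping: one must check in each of the four cases, and for each admissible value of the relevant pair $\{x\mapsto?,\,y\mapsto?\}$ (there are several because the pairs in Lemma~\ref{R_non-eq} come in a handful of shapes $\{h_1,h_3\},\{h_2,h_4\},\{h_1,h_4\}$ with various first letters), that (a) the prefix $w_1x$ actually lands in $A_2$ with the required final generator, (b) the absorbed letters really are killed by the idempotent relations regardless of where they sit in $w_1,w_2$, and (c) the suffix $w_2$ does not accidentally move the product into $A_3$ in one word but not the other. Point (b) is where the hypotheses "$xy$ does not occur in $w_1x$", "$x$ does not occur in $yw_2$", etc., earn their keep, and I would isolate a small sublemma: if a letter is assigned an idempotent $h_i$ and every neighbouring occurrence in the word is assigned an $h_j$ with $|i-j|=1$, the assignment contributes nothing to the value. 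With that in hand the proof reduces to four short, essentially symmetric verifications, each ending with an appeal to Lemma~\ref{R_non-eq} or Lemma~\ref{L_non-eq}.
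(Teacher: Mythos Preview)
Your high-level plan matches the paper's approach exactly: construct explicit substitutions so that $\phi(w_1x),\phi(v_1z)$ (or $\phi(yw_2),\phi(tv_2)$) lie in $A_2$ while the next generator pushes each product into $A_3$ in one of the forbidden configurations of Lemma~\ref{R_non-eq} (resp.\ Lemma~\ref{L_non-eq}); since $A_3$ is the minimum $\J$-class, stability then guarantees the tail cannot repair the $\R$-class (resp.\ $\eL$-class) discrepancy. The left/right pairing of cases you describe is also what the paper does.

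Where your proposal falls short is in the concrete substitutions, and the hints you give do not quite work. Two specific points. First, cases~(1) and~(2) are not handled by a common scheme. In case~(1) ($x=z$, hence $y\neq t$) the paper must further split according to whether $y=x$ or $|\{x,y,t\}|=3$, and invokes part~$(\ast2)$ of Lemma~\ref{R_non-eq} with, for instance, $x\mapsto h_2h_3h_4$, $t\mapsto h_1h_2$, and every remaining letter $\mapsto h_3h_2$. In case~(2) the hypothesis is stronger ($y=t$ does not occur in the prefixes at all, and now $x\neq z$), and the paper exploits this by sending $x\mapsto h_3h_4$, $z\mapsto h_4h_3$, $y\mapsto h_2h_3h_1$, other letters $\mapsto h_3$, landing in part~$(\ast1)$ instead. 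A single recipe does not cover both. Second, your suggestion to send the remaining letters to a single generator $h_1$ or $h_2$ and absorb via $h_jh_ih_j=h_j$ does not survive the constraints: in case~(1) the hypothesis forbids only $xy$ and $zt$ in the prefix, so $y$ or $t$ may well appear there following an ``other'' letter, and a single-generator assignment to ``other'' then typically creates an $h_1h_3$- or $h_2h_4$-type adjacency that drops the prefix into $A_3$ prematurely. The two-letter values such as $h_3h_2$ that the paper assigns to the remaining letters are precisely what make the absorption work; your proposed sublemma has to be replaced by a direct check, for each specific substitution, that every two-letter transition permitted by the hypothesis keeps the running product in $A_2$.
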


\begin{proof}
(1) Since $x=z$ and $xy\neq zt$, by symmetry we may assume that
one of the following conditions holds:
\begin{enumerate}
\item [(I)]  $xy=xx$, $zt=xt$ and $x\neq t$;
\item [(II)] $zt=xt$ and $\abs{\{x,y,t\}}=3$.
\end{enumerate}
For every case, we substitute as follows:\\
(I) substitute the letter $x$ the value $h_2h_3h_4$ and the letter $t$ the value $h_1h_2$. If there is another letter in $c(w_Y)\setminus\{x,t\}$, substitute it the value $h_3h_2$. By Lemma~\ref{R_non-eq}.$(\ast2)$, we obtain that $w_Y$ and $v_Y$ are in different $\R$-classes.\\
(II) substitute the letter $x$ the value $h_3h_4$, the letter $y$ the value $h_1h_2$ and the letter $t$ the value $h_2$. If there is a letter in $c(w_Y)\setminus\{x,y,t\}$, substitute it the value $h_3h_2$. Again, by Lemma~\ref{R_non-eq}.$(\ast2)$, we obtain that $w_Y$ and $v_Y$ are in different $\R$-classes.

(2) Substitute the letter $x$ the value $h_3h_4$, the letter $z$ the value $h_4h_3$, the letter $y$ the value $h_2h_3h_1$ and the other letters the value $h_3$. Since $y_1=z_1$, one of the following conditions holds:
\begin{enumerate}
\item [(I)]  $w_1xy=h_3h_4h_2h_3h_1$ and $v_1zy=h_3h_4h_3h_2h_3h_1=h_3h_1$;
\item [(II)] $w_1xy=h_4h_3h_4h_2h_3h_1=h_4h_2h_3h_1$ and $v_1zy=h_4h_3h_2h_3h_1=h_4h_3h_1$.
\end{enumerate}
In both cases, by Lemma~\ref{R_non-eq}.$(\ast1)$, we obtain that $w_Y$ and $v_Y$ are in different $\R$-classes.

(3) As $y=t$ and $xy\neq zt$, by symmetry we may assume that
one of the following conditions holds:
\begin{enumerate}
\item [(I)]  $xy=xx$, $zt=zx$ and $x\neq z$;
\item [(II)] $zt=zy$ and $\abs{\{x,y,z\}}=3$.
\end{enumerate}
Like as above, for every case, we substitute as follows:\\
(I) substitute the letter $x$ the value $h_4h_3h_2$ and the letter $z$ the value $h_2h_1$. If there is another letter in $c(w_Y)\setminus\{x,z\}$, substitute it the value $h_2h_3$. By Lemma~\ref{L_non-eq}.$(\ast2)$, we obtain that $w_Y$ and $v_Y$ are in different $\eL$-classes.\\
(II) substitute the letter $x$ the value $h_2h_1$, the letter $y$ the value $h_4h_3$ and the letter $z$ the value $h_2$. If there is a letter in $c(w_Y)\setminus\{x,y,z\}$, substitute it the value $h_2h_3$. Again, by Lemma~\ref{L_non-eq}.$(\ast2)$, we obtain that $w_Y$ and $v_Y$ are in different $\eL$-classes.

(4) Substitute the letter $y$ the value $h_3h_4$, the letter $t$ the value $h_4h_3$, the letter $x$ the value $h_1h_3h_2$ and the other letters the value $h_3$. Since $y_r=z_s$, one of the following conditions holds:
\begin{enumerate}
\item [(I)]  $xyw_2=h_1h_3h_2h_3h_4h_3=h_1h_3$ and $xtv_2=h_1h_3h_2h_4h_3$;
\item [(II)] $xyw_2=h_1h_3h_2h_3h_4=h_1h_3h_4$ and $xtv_2=h_1h_3h_2h_4h_3h_4=h_1h_3h_2h_4$;
\end{enumerate}
In both cases, by Lemma~\ref{L_non-eq}.$(\ast1)$, we obtain that $w_Y$ and $v_Y$ are in different $\eL$-classes.
\end{proof}

\begin{lem}\label{x,y,z,t-2}
Let $x,y,z,t\in Y$ such that $x\neq z$ and $y\neq t$.
Suppose that there exist words $w_1$, $w_2$, $v_1$ and $v_2$ in $Y^*$ such that
$w_Y=w_1xyw_2$ and $v_Y=v_1ztv_2$. Let $C_1=\{w_1x,v_1z\}$, $C_2=\{yw_2,tv_2\}$ and let $C\in\{C_1,C_2\}$.
If $xy$ and $zt$ do not occur in the elements of $C$, and one of the following states holds:\\
\noindent $\bf{(xt)\colon}$ $xt$ does not occur in the elements of $C$;\\
\noindent $\bf{(zy)\colon}$ $zy$ does not occur in the elements of $C$;\\
\noindent $\bf{(xu,zu')\colon}$ there exists a subset $\{x,y,z,t\}\subseteq Y'\subseteq Y$ such that $Y'=Y_1\cup Y_2$, for some subsets $Y_1$ and $Y_2$, with the following conditions:
\begin{enumerate}
\item $Y_1\cap Y_2=\emptyset$;
\item if $u\in Y_1$, then $xu$ does not occur in $o_{Y'}$, for $o\in C$;
\item if $u'\in Y_2$, then $zu'$ does not occur in $o_{Y'}$, for $o\in C$;
\item $xy$ and $zt$ does not occur in $o_{Y'}$, for $o\in C$,
\end{enumerate}
\noindent $\bf{(uy,u't)\colon}$ there exists a subset $\{x,y,z,t\}\subseteq Y'\subseteq Y$ such that $Y'=Y_1\cup Y_2$, for some subsets $Y_1$ and $Y_2$, with the following conditions:
\begin{enumerate}
\item $Y_1\cap Y_2=\emptyset$;
\item if $u\in Y_1$, then $uy$ does not occur in $o_{Y'}$, for $o\in C$;
\item if $u'\in Y_2$, then $u't$ does not occur in $o_{Y'}$, for $o\in C$;
\item $xy$ and $zt$ does not occur in $o_{Y'}$, for $o\in C$,
\end{enumerate}
then $\mathcal{J}_5$ does not satisfy the identity $w_Y=v_Y$.
\end{lem}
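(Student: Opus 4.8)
The plan is to imitate the method of Lemma~\ref{x,y,z,t-1}: for a carefully chosen homomorphism $\phi\colon X^{*}\to\mathcal{J}_5$ we show that $\phi(w_Y)$ and $\phi(v_Y)$ lie in different $\R$-classes when $C=C_1$, and in different $\eL$-classes when $C=C_2$; either way, $\mathcal{J}_5$ does not satisfy $w_Y=v_Y$.

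\emph{Reduction to the case $C=C_1$.} The assignment $h_i\mapsto h_i$ extends to an anti-automorphism $\rho$ of $\mathcal{J}_5$, since each relation in~(\ref{Jn-relations}) is preserved when both of its sides are read from right to left; moreover $\rho$ interchanges the relations $\R$ and $\eL$. Reading the words $w_Y=w_1xyw_2$ and $v_Y=v_1ztv_2$ backwards converts a $C=C_2$ instance of the lemma into a $C=C_1$ instance: the pair $(x,y)$ is replaced by $(y,x)$, the pair $(z,t)$ by $(t,z)$, the set $C_2$ becomes the new $C_1$, Lemma~\ref{L_non-eq} takes over the role of Lemma~\ref{R_non-eq}, and the four states are permuted (with $(xt)$ and $(zy)$ interchanged, and $(xu,zu')$ and $(uy,u't)$ interchanged). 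A homomorphism witnessing the failure of the reversed identity, composed with $\rho$, witnesses the failure of $w_Y=v_Y$. So from now on assume $C=C_1=\{w_1x,\,v_1z\}$ and aim to separate the $\R$-classes of $\phi(w_Y)$ and $\phi(v_Y)$.

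\emph{The mechanism that closes the argument.} Since $A_3$ is an ideal of $\mathcal{J}_5$, for any $p\in A_3$ and any $q\in\mathcal{J}_5$ we have $pq\in A_3=J_p$, hence by the stability property $pq\in p\mathcal{J}_5\cap J_p=R_p$, i.e. $p\R pq$. Consequently it is enough to construct $\phi$ so that
\begin{enumerate}
\item $\phi(w_1x)\in A_2$ and $\phi(v_1z)\in A_2$; and
\item written out as words in $h_1,h_2,h_3,h_4$, the pair $\bigl(\phi(w_1xy),\phi(v_1zt)\bigr)$ satisfies one of the conditions $(\ast1)$--$(\ast5)$ of Lemma~\ref{R_non-eq}, the hypothesis $a_1=b_1$ being available because $y_1=z_1$.
\end{enumerate}
Indeed, $(2)$ forces $\phi(w_1xy),\phi(v_1zt)\in A_3$ and into different $\R$-classes; then $\phi(w_Y)=\phi(w_1xy)\phi(w_2)$ is $\R$-equivalent to $\phi(w_1xy)$, and $\phi(v_Y)=\phi(v_1zt)\phi(v_2)$ to $\phi(v_1zt)$, so $\phi(w_Y)$ and $\phi(v_Y)$ are in different $\R$-classes.

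\emph{Building $\phi$.} The substitution is designed exactly as in Lemma~\ref{x,y,z,t-1}. Every letter of $Y$ other than $x,y,z,t$ receives a common short value ($h_3$, $h_3h_2$, or the like); $x$ and $z$ receive short words beginning with the appropriate generator (the same one, by $y_1=z_1$); and $y,t$ receive short words chosen so that, appending $\phi(y)$ to $\phi(w_1x)$ (and $\phi(t)$ to $\phi(v_1z)$) generator by generator, every partial product stays in $A_2$ except that the last generator pushes it into $A_3$ in one of the configurations $(\ast1)$--$(\ast5)$. What makes step $(1)$ go through is precisely the forbidden-adjacency hypotheses: in the states $(xt)$ and $(zy)$, the only two-letter blocks whose images could drop $\phi(w_1x)$ or $\phi(v_1z)$ into $A_3$ too early are $xy,\,zt$, together with $xt$, respectively $zy$, and these are assumed not to occur in $w_1x$ or $v_1z$. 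For the states $(xu,zu')$ and $(uy,u't)$ one first composes $\phi$ with the homomorphism sending every letter outside $Y'$ to $1$ (so that $\phi(o)=\phi(o_{Y'})$ for $o\in C$ and one may work inside $Y'$), and then uses the partition $Y'=Y_1\cup Y_2$ to give all letters of $Y_1$ one common value and all letters of $Y_2$ another, chosen so that the dangerous adjacencies are exactly the blocks $xu$ with $u\in Y_1$ and $zu'$ with $u'\in Y_2$ --- respectively $uy$ with $u\in Y_1$ and $u't$ with $u'\in Y_2$ --- thereby reducing these states to $(xt)$ and $(zy)$.

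\emph{Where the work is.} The real content is the case analysis at the end: for each of the four states, and for each shape of $w_1,v_1$ compatible with $y_1=z_1$, one must verify, using only $h_ih_{i\pm1}h_i=h_i$ and $h_i^2=h_i$, that the prescribed substitution does keep $\phi(w_1x)$ and $\phi(v_1z)$ in $A_2$ (no premature descent to $A_3$) and does place $\bigl(\phi(w_1xy),\phi(v_1zt)\bigr)$ in exactly one of $(\ast1)$--$(\ast5)$. This is routine but lengthy, just as in the proof of Lemma~\ref{x,y,z,t-1}.
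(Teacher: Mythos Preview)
Your strategy is exactly the paper's: exhibit a homomorphism $\phi$ with $\phi(w_1x),\phi(v_1z)\in A_2$ and $\phi(w_1xy),\phi(v_1zt)$ landing in different $\R$-classes via Lemma~\ref{R_non-eq} (the $\eL$-analogue for $C=C_2$). The anti-automorphism reduction from $C_2$ to $C_1$ is a pleasant streamlining that the paper does not use; the paper instead handles $C_1$ and $C_2$ simultaneously by the same substitutions, invoking Lemma~\ref{R_non-eq}.$(\ast1)$ for $C_1$ and Lemma~\ref{L_non-eq}.$(\ast1)$ for $C_2$.

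The gap is that you have not produced any substitutions. In the paper's proof the entire content is an explicit list of twenty homomorphisms, one for each pair (shape of $\{xy,zt\}$, state of the lemma), the five shapes being $(xx,zz)$, $(xx,zt)$, $(xy,zx)$, $(xy,yx)$, $(xy,zt)$ according to which of $x,y,z,t$ coincide. These substitutions are far from uniform: the values assigned to $x,y,z,t$ depend on both shape and state, and in the states $(xu,zu')$ and $(uy,u't)$ several of them are even conditional on whether the letter lies in $Y_1$ or $Y_2$. Calling this ``routine'' is optimistic: choosing $\phi$ so that no two-letter block actually present in $w_1x$ or $v_1z$ drops the running product into $A_3$, while simultaneously forcing the last two generators of $\phi(w_1xy)$ and $\phi(v_1zt)$ into configuration $(\ast1)$ of Lemma~\ref{R_non-eq}, is a delicate balance the author resolves case by case. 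Your sketch does not engage with this, and your claim that states $(xu,zu')$ and $(uy,u't)$ ``reduce to $(xt)$ and $(zy)$'' is not a reduction in any formal sense --- the paper uses genuinely different substitutions for these states. Without the explicit constructions the proof is incomplete.
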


\begin{proof}
Since $x\neq z$ and $y\neq t$, by symmetry, we may assume that one of the following states holds:\\
\noindent $\bf{(xx,zz)\colon}$ $x=y$, $z=t$ and $x\neq z$;\\
\noindent $\bf{(xx,zt)\colon}$ $x=y$ and $\abs{\{x,z,t\}}=3$;\\
\noindent $\bf{(xy,zx)\colon}$ $x=t$ and $\abs{\{x,y,z\}}=3$;\\
\noindent $\bf{(xy,yx)\colon}$ $x=t$, $y=z$ and $x\neq y$;\\
\noindent $\bf{(xy,zt)\colon}$ $\abs{\{x,y,z,t\}}=4$.

For every state $\bf{(xx,zz)}$, $\bf{(xx,zt)}$, $\bf{(xy,zx)}$, $\bf{(xy,yx)}$, $\bf{(xy,zt)}$ and every states of the lemma, we define a homomorphism $\phi\colon c(w_Y)^{\ast}\rightarrow \mathcal{J}_5$ as follows:\\
\noindent $\bf{((xx,zz),(xt))\colon}$ $\phi(x)= h_3h_2h_1$, $\phi(z)= h_4h_3h_2$ and for every $u\in c(w_Y)\setminus \{x,z\}$, $\phi(u)= h_2h_3$.\\
\noindent $\bf{((xx,zz),(zy))\colon}$ $\phi(x)= h_1h_2h_3$, $\phi(z)= h_2h_3h_4$ and for every $u\in c(w_Y)\setminus \{x,z\}$, $\phi(u)= h_3h_2$.\\
\noindent $\bf{((xx,zz),(xu,zu'))\colon}$ $\phi(x)= h_3h_2h_1$, $\phi(z)= h_2h_3h_4$, for every $u\in Y_1\setminus\{x\}$, $\phi(u)= h_3$, for every $u'\in Y_2\setminus\{z\}$,  $\phi(u')= h_2$ and for every $u''\in Y\setminus Y'$, $\phi(u'')= 1$.\\
\noindent $\bf{((xx,zz),(uy,u't))\colon}$ $\phi(x)= h_1h_2h_3$, $\phi(z)= h_4h_3h_2$, for every $u\in Y_1\setminus\{x\}$, $\phi(u)= h_3$, for every $u'\in Y_2\setminus\{z\}$, $\phi(u') = h_2$ and for every $u''\in Y\setminus Y'$, $\phi(u'')= 1$.

\noindent $\bf{((xx,zt),(xt))\colon}$ $\phi(x)= h_3h_2h_1$, $\phi(z)= h_2$, $\phi(t)= h_4h_3$ and for every $u\in c(w_Y)\setminus \{x,z,t\}$, $\phi(u)= h_2h_3$.\\
\noindent $\bf{((xx,zt),(zy))\colon}$ $\phi(x)= h_1h_2h_3$, $\phi(z)= h_3h_4$, $\phi(t)= h_2$ and for every $u\in c(w_Y)\setminus \{x,z,t\}$, $\phi(u)= h_3h_2$.\\
\noindent $\bf{((xx,zt),(xu,zu'))\colon}$ $\phi(x)= h_3h_2h_1$, $\phi(t)= h_2h_3$, for every $u\in Y_1\setminus\{x,z\}$, $\phi(u)= h_3$, for every $u'\in Y_2\setminus\{z,t\}$, $\phi(u')= h_2$ and for every $u''\in Y\setminus Y'$, $\phi(u'')= 1$. If $z\in Y_1$ then $\phi(z)= h_3h_4$, otherwise, $\phi(z)= h_2h_3h_4$.\\
\noindent $\bf{((xx,zt),(uy,u't))\colon}$ $\phi(x)= h_1h_2h_3$, $\phi(z)= h_3h_2$, for every $u\in Y_1\setminus\{x,t\}$, $\phi(u)= h_3$, for every $u'\in Y_2\setminus\{z,t\}$, $\phi(u')= h_2$ and for every $u''\in Y\setminus Y'$, $\phi(u'')= 1$. If $t\in Y_1$ then $\phi(t)= h_4h_3$, otherwise, $\phi(t)= h_4h_3h_2$.

\noindent $\bf{((xy,zx),(xt))\colon}$ $\phi(x)= h_4h_3h_2h_1$, $\phi(y)= h_3$, $\phi(z)= h_2$ and for every $u\in c(w_Y)\setminus \{x,y,z\}$, $\phi(u)= h_2h_3$.\\
\noindent $\bf{((xy,zx),(zy))\colon}$ $\phi(x)= h_2h_3$, $\phi(y)= h_1h_2$, $\phi(z)= h_3h_4$ and for every $u\in c(w_Y)\setminus \{x,y,z\}$, $\phi(u)= h_3h_2$.\\
\noindent $\bf{((xy,zx),(xu,zu'))\colon}$ $\phi(x)= h_2h_1$, $\phi(y)= h_3$, for every $u\in Y_1\setminus\{z\}$, $\phi(u)= h_3$, for every $u'\in Y_2\setminus\{z,x\}$, $\phi(u')= h_2$ and for every $u''\in Y\setminus Y'$, $\phi(u'')= 1$. If $z\in Y_1$ then $\phi(z)= h_3h_4$, otherwise, $\phi(z)= h_2h_3h_4$.\\
\noindent $\bf{((xy,zx),(uy,u't))\colon}$ $\phi(x)= h_4h_3$, $\phi(z)= h_3h_2$, for every $u\in Y_1\setminus\{y,x\}$, $\phi(u)= h_3$, for every $u'\in Y_2\setminus\{y,z\}$, $\phi(u')= h_2$ and for every $u''\in Y\setminus Y'$, $\phi(u'')= 1$. If $y\in Y_1$ then $\phi(y)= h_1h_2h_3$, otherwise, $\phi(y)=h_1h_2$.

\noindent $\bf{((xy,yx),(xt))\colon}$ $\phi(x)= h_4h_3h_2h_1$, $\phi(y)= h_3h_2$ and for every $u\in c(w_Y)\setminus \{x,y\}$, $\phi(u)= h_2h_3$.\\
\noindent $\bf{((xy,yx),(zy))\colon}$ $\phi(x)= h_2h_3$, $\phi(y)= h_1h_2h_3h_4$ and for every $u\in c(w_Y)\setminus \{x,y\}$, $\phi(u)= h_3h_2$.\\
\noindent $\bf{((xy,yx),(xu,zu'))\colon}$  $\phi(x)= h_2h_1$, $\phi(y)= h_3h_4$, for every $u\in Y_1\setminus\{y\}$, $\phi(u)= h_3$, for every $u'\in Y_2\setminus\{x\}$,  $\phi(u')= h_2$ and for every $u''\in Y\setminus Y'$, $\phi(u'')= 1$.\\
\noindent $\bf{((xy,yx),(uy,u't))\colon}$ $\phi(x)= h_4h_3$, $\phi(y)= h_1h_2$, for every $u\in Y_1\setminus\{x\}$, $\phi(u)= h_3$, for every $u'\in Y_2\setminus\{y\}$,  $\phi(u')= h_2$ and for every $u''\in Y\setminus Y'$, $\phi(u'')= 1$.

\noindent $\bf{((xy,zt),(xt))\colon}$ $\phi(x)= h_2h_1$, $\phi(y)= h_3$, $\phi(z)= h_2$, $\phi(t)= h_4h_3$ and for every $u\in c(w_Y)\setminus \{x,y,z,t\}$, $\phi(u)= h_2h_3$.\\
\noindent $\bf{((xy,zt),(zy))\colon}$ $\phi(x)= h_3$, $\phi(y)= h_1h_2$, $\phi(z)= h_3h_4$, $\phi(t)= h_2$ and for every $u\in c(w_Y)\setminus \{x,y,z,t\}$, $\phi(u)= h_3h_2$.\\
\noindent $\bf{((xy,zt),(xu,zu'))\colon}$ $\phi(y)= h_3$, $\phi(t)= h_2$, for every $u\in Y_1\setminus \{x,z\}$, $\phi(u)= h_3$, for every $u'\in Y_2 \setminus \{x,z\}$,  $\phi(u')= h_2$ and for every $u''\in Y\setminus Y'$, $\phi(u'')= 1$.  If $x\in Y_1$ then $\phi(x)= h_3h_2h_1$, otherwise, $\phi(x)= h_2h_1$. Also, if $z\in Y_1$ then $\phi(z)= h_3h_4$, otherwise, $\phi(z)= h_2h_3h_4$.\\ 
\noindent $\bf{((xy,zt),(uy,u't))\colon}$ $\phi(x)= h_2h_3$, $\phi(z)= h_3h_2$, for every $u\in Y_1\setminus \{x,y,t\}$, $\phi(u)= h_3$, for every $u'\in Y_2 \setminus \{y,z,t\}$,  $\phi(u')= h_2$ and for every $u''\in Y\setminus Y'$, $\phi(u'')= 1$.  If $y\in Y_1$ then $\phi(y)= h_1h_2h_3$, otherwise, $\phi(y)= h_1h_2$. Also, if $t\in Y_1$ then $\phi(t)= h_4h_3$, otherwise, $\phi(t)= h_4h_3h_2$.

By the substitutions for the state $\bf{(xx,zz)}$ subject to the conditions of the states $\bf{(xt)}$, $\bf{(zy)}$, $\bf{(xu,zu')}$ and $\bf{(uy,u't)}$, the elements $\phi(xx)$ and $\phi(zz)$ are in $A_3$, and the elements $\phi(w_1x)$ and $\phi(v_1z)$ are in $A_2$, for $C=C_1$. Also, for $C=C_2$, the elements $\phi(yw_2)$ and $\phi(tv_2)$ are in $A_2$. These substitutions satisfy the conditions of Lemma~\ref{R_non-eq}.$(\ast1)$ or Lemma~\ref{L_non-eq}.$(\ast1)$. Therefore, $\mathcal{J}_5$ does not satisfy the identity $w_Y=v_Y$, for the state $\bf{(xx,zz)}$ and $C\in\{C_1,C_2\}$. We have same result for other states $\bf{(xx,zt)}$, $\bf{(xy,zx)}$, $\bf{(xy,yx)}$ and $\bf{(xy,zt)}$ by others substitutions as above. Hence, we conclude that $\mathcal{J}_5$ does not satisfy the identity $w_Y=v_Y$.
\end{proof}

\begin{thm}\label{main}
Let $w$ and $v$ be words of $X^*$. The monoid $\mathcal{J}_5$ satisfies the identity $w=v$ if and only if $c(w)=c(v)$ and for every non empty subset $Y$ of $c(w)$, the following conditions hold:
\begin{enumerate}
\item the first letter of $w_Y$ and $v_Y$ are equal;
\item the last letter of $w_Y$ and $v_Y$ are equal;
\item each word of length 2 occurs in $w_Y$ if and only if it occurs in $v_Y$;
\item let $x,y,z,t\in Y$ with $xy\neq zt$.
Suppose that there exist words $w_1$, $w_2$, $v_1$ and $v_2$ in $Y^*$ 
such that $w_Y=w_1xyw_2$ and $v_Y=v_1ztv_2$. Let $C_1=\{w_1x,v_1z\}$, $C_2=\{yw_2,tv_2\}$ and let $C\in\{C_1,C_2\}$.
The following conditions hold:
\begin{enumerate}
\item if $xy$ and $zt$ do not occur in the elements of $C_1$, then $x\neq z$;
\item if $y$ does not occur in the elements of $C_1$, then $y\neq t$;
\item if $xy$ and $zt$ do not occur in the elements of $C_2$, then $y\neq t$;
\item if $x$ does not occur in the elements of $C_2$, then $x\neq z$;
\item if $x\neq z$, $y\neq t$ and, $xy$ and $zt$ do not occur in the elements of $C$ then the following conditions hold;
\begin{enumerate}
\item $xt$ occurs in the elements of $C$;
\item $zy$ occurs in the elements of $C$;
\item there does not exist a subset $\{x,y,z,t\}\subseteq Y'\subseteq Y$ such that $Y'=Y_1\cup Y_2$, for some subsets $Y_1$ and $Y_2$, with the following conditions:
\begin{enumerate}
\item $Y_1\cap Y_2=\emptyset$;
\item if $u\in Y_1$, then $xu$ does not occur in $o_{Y'}$, for $o\in C$;
\item if $u'\in Y_2$, then $zu'$ does not occur in $o_{Y'}$, for $o\in C$;
\item $xy$ and $zt$ does not occur in $o_{Y'}$, for $o\in C$,
\end{enumerate}
\item there does not exist a subset $\{x,y,z,t\}\subseteq Y'\subseteq Y$ such that $Y'=Y_1\cup Y_2$, for some subsets $Y_1$ and $Y_2$, with the following conditions:
\begin{enumerate}
\item $Y_1\cap Y_2=\emptyset$;
\item if $u\in Y_1$, then $uy$ does not occur in $o_{Y'}$, for $o\in C$;
\item if $u'\in Y_2$, then $u't$ does not occur in $o_{Y'}$, for $o\in C$;
\item $xy$ and $zt$ does not occur in $o_{Y'}$, for $o\in C$.
\end{enumerate}
\end{enumerate}
\end{enumerate}
\end{enumerate}
\end{thm}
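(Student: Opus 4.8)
The proof naturally splits into the \emph{necessity} and the \emph{sufficiency} of the stated conditions.

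\emph{Necessity} is a repackaging of the lemmas already proved. The key preliminary remark is that if $\mathcal J_5$ satisfies $w=v$, then it satisfies $w_Y=v_Y$ for every non-empty $Y\subseteq c(w)$: given a homomorphism $\psi\colon Y^{*}\to\mathcal J_5$, extend it to $\widehat\psi\colon X^{*}\to\mathcal J_5$ by $\widehat\psi(x)=1$ for $x\notin Y$; since deleting the letters sent to $1$ turns $w$ into $w_Y$ we get $\widehat\psi(w)=\psi(w_Y)$ and $\widehat\psi(v)=\psi(v_Y)$, whence $\psi(w_Y)=\psi(v_Y)$. Granting this, Lemma~\ref{c-w-v} yields $c(w)=c(v)$, Lemma~\ref{M-x_1=y_1-x_n=y_m} applied to $(w_Y,v_Y)$ yields conditions (1) and (2), Lemma~\ref{word-of-length-2} yields (3), Lemma~\ref{x,y,z,t-1} yields (4a)--(4d), and Lemma~\ref{x,y,z,t-2} yields (4e); in each case the theorem's condition is literally the contrapositive of the cited lemma.

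\emph{Sufficiency.} Assume $c(w)=c(v)$ and that (1)--(4) hold for every non-empty $Y\subseteq c(w)$; the task is $\phi(w)=\phi(v)$ for every homomorphism $\phi\colon X^{*}\to\mathcal J_5$. Fix $\phi$ and set $Y_0=\{x\in c(w):\phi(x)\neq1\}$. Since $\phi(w)=\phi(w_{Y_0})$, $\phi(v)=\phi(v_{Y_0})$ and $(w_{Y_0})_{Y'}=w_{Y'}$ for $Y'\subseteq Y_0$, the words $w_{Y_0},v_{Y_0}$ over the alphabet $Y_0$ inherit all of (1)--(4), so by induction on $\abs{c(w)}$ we may assume $Y_0=c(w)$, i.e.\ that $\phi$ maps every letter into $A_2\cup A_3$. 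As $\mathcal J_5$ is finite and aperiodic, its $\HH$-classes are trivial, so $\phi(w)=\phi(v)$ is equivalent to $\phi(w)\HH\phi(v)$, that is, to $\phi(w)\R\phi(v)$ together with $\phi(w)\eL\phi(v)$. Finally $\mathcal J_5$ carries the reversal anti-automorphism fixing each $h_i$; it leaves $A_2$ and $A_3$ invariant, interchanges $\R$ and $\eL$, and sends the entire list of hypotheses to itself (exchanging ``first letter'' with ``last letter'', $C_1$ with $C_2$, and the two marked positions in each pair), so it suffices to prove $\phi(w)\R\phi(v)$. To do this, recall that two elements of $\mathcal J_5$ are $\R$-related precisely when their diagrams induce the same non-crossing partial matching on the five left pins --- equivalently, when they lie in the same one of the nine $\R$-classes of $A_2\cup A_3$ (the four $A_2$-classes with $l$-wire $\{i,i{+}1\}$, and the five $A_3$-classes of Figure~\ref{fig:A3}). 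The plan is to read $\phi(w)$ and $\phi(v)$ from the left, one generator-image at a time, tracking the $\R$-class of the growing prefixes; Lemmas~\ref{R_non-eq} and~\ref{L_non-eq}, which record for products of $h_1,\dots,h_4$ sharing a first (resp.\ last) letter exactly when two such products do or do not lie in one $\R$-class (resp.\ $\eL$-class), are tailored for this, and Lemmas~\ref{x,y,z,t-1},~\ref{x,y,z,t-2} translate the possible separations into conditions (4). One checks that (1)--(4) encode exactly the data about $w$ and $v$ still visible in the $\R$-class of their $\phi$-images: (1) makes the images start in the same $\R$-class; (3) (equality of the sets of length-$2$ factors) controls the local behaviour; and (4a)--(4e) rule out each residual way --- the configurations ``$\{h_1,h_3\}$ vs.\ $\{h_2,h_4\}$'', ``$\cdots h_4h_1$ vs.\ $\cdots h_4h_2$'', ``$\cdots h_1h_3$ vs.\ $\cdots h_1h_4$'' and their mirror images, catalogued in Lemmas~\ref{R_non-eq},~\ref{L_non-eq} --- in which prefixes agreeing on first letter, last letter and length-$2$ factors could still diverge. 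Assembling these gives $\phi(w)\R\phi(v)$, and dually $\phi(w)\eL\phi(v)$, hence $\phi(w)=\phi(v)$.

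The whole weight of the argument falls on that last verification: that Lemmas~\ref{R_non-eq},~\ref{L_non-eq},~\ref{x,y,z,t-1},~\ref{x,y,z,t-2} genuinely exhaust the obstructions, so that violating \emph{none} of (1)--(4) forces the two $\R$-classes to coincide. I expect this to be the main obstacle. It is a finite but delicate case analysis over the transitions of $\mathcal J_5$ under right multiplication by its $41$ non-identity elements, organised by the $\J$-class of each prefix and by the shapes of the words; the hardest cases are the four-letter ones of Lemma~\ref{x,y,z,t-2}, where for sufficiency one must exclude \emph{every} admissible splitting $Y'=Y_1\cup Y_2$ and run through the sub-cases $(xx,zz)$, $(xx,zt)$, $(xy,zx)$, $(xy,yx)$, $(xy,zt)$ crossed with the states $(xt)$, $(zy)$, $(xu,zu')$, $(uy,u't)$ from that lemma's proof.
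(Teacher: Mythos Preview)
Your necessity argument is correct and matches the paper's: both substitute $1$ for letters outside $Y$ to pass from $w=v$ to $w_Y=v_Y$, and then read off the contrapositives of the five lemmas.

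For sufficiency, however, you have not given a proof but a programme, and you say so yourself (``I expect this to be the main obstacle''). The paper's actual argument is considerably shorter and more focused than the sweeping case analysis you anticipate, and it hinges on an organising observation you are missing. Rather than tracking the $\R$-class of \emph{every} growing prefix, the paper locates the single critical index: since $\phi(y_1)=\phi(z_1)$ and condition~(3) forces $\phi(w_Y),\phi(v_Y)$ to lie in the same $\J$-class, the $A_2$ case is immediate (an element of $A_2$ is determined by its $\R$- and $\eL$-class, hence by the first and last letter of any expression), and in the $A_3$ case one takes the least $i_1$ with $\phi(y_1\cdots y_{i_1+1})\in A_3$ and likewise $i_2$ for $v_Y$. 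The prefixes $\phi(y_1\cdots y_{i_1})$ and $\phi(z_1\cdots z_{i_2})$ lie in $A_2$ and share their first letter, so they are $\R$-equivalent; thus the $\R$-class of $\phi(w_Y)$ versus $\phi(v_Y)$ is decided entirely by the two-letter words $y_{i_1}y_{i_1+1}$ and $z_{i_2}z_{i_2+1}$ at the transition.

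At that point the paper invokes Lemma~\ref{R_non-eq} \emph{directly} (not its converse via Lemmas~\ref{x,y,z,t-1},~\ref{x,y,z,t-2}) to list the five possible shapes $(\mathrm{A1})$--$(\mathrm{A5})$ of the pair $\bigl(\phi(y_{i_1})\phi(y_{i_1+1}),\,\phi(z_{i_2})\phi(z_{i_2+1})\bigr)$ that could place them in distinct $\R$-classes, and shows each shape contradicts some clause of condition~(4)(e). There is also a preliminary step you have not flagged: one must first rule out $\phi(y_{i_1+1})\in A_3$ (and symmetrically $\phi(z_{i_2+1})\in A_3$), which the paper does using (4)(a),(b) together with condition~(1) applied to the two-letter subset $\{y_{i_1+1},z_{i_2+1}\}$. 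Your outline, by contrast, proposes to run the $(xx,zz),\dots,(xy,zt)\times(xt),(zy),(xu,zu'),(uy,u't)$ grid from the \emph{necessity} lemmas in reverse; that grid was designed to manufacture witnessing substitutions, not to exhaust obstructions, and it is not clear how you would close the argument that way. The missing idea is precisely the ``first drop into $A_3$'' reduction, which collapses the whole verification to a handful of cases governed by Lemma~\ref{R_non-eq}.
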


\begin{proof}
If $\mathcal{J}_5$ satisfies the identity $w=v$, then by Lemmas~\ref{c-w-v}, we have $c(w)=c(v)$ and $\mathcal{J}_5$ satisfies $w_Y=v_Y$, for every subset $Y\subseteq c(w)$. It is followed easily by substituting every letter in $X\setminus Y$ value 1. Then, by~\ref{M-x_1=y_1-x_n=y_m}, \ref{word-of-length-2}, \ref{x,y,z,t-1} and \ref{x,y,z,t-2}, all Conditions~(1), (2), (3) and (4) hold.

Now, suppose the contrary that $\mathcal{J}_5$ does not satisfy the identity $w=v$, $c(w)=c(v)$ and the conditions of the theorem hold. 
Hence, there exists a homomorphism $\phi\colon c(w)^{\ast}\rightarrow \mathcal{J}_5$ such that $\phi(w)\neq \phi(v)$.
Let $Y=\{y\in c(w)\mid \phi(y)\neq 1\}$. It easily follows that $\phi(w_Y)\neq \phi(v_Y)$.
There exist letters $y_1, \ldots, y_r, z_1, \ldots, z_s$ in $Y$ 
such that  $w_Y= y_1\cdots y_r$ and $v_Y= z_1\cdots z_s$. By Condition~(3), we have $\phi(w_Y),\phi(v_Y)\in A_2$ or $\phi(w_Y),\phi(v_Y)\in A_3$. By Conditions~(1) and (2), we have $y_1=z_1$ and $y_r=z_s$. Now, as $\mathcal{J}_5$ is aperiodic, if $\phi(w_Y),\phi(v_Y)\in A_2$, then we have $\phi(w_Y)=\phi(v_Y)$, a contradiction. Hence, we have $\phi(w_Y),\phi(v_Y)\in A_3$. Since $\phi(w_Y)\neq \phi(v_Y)$ and $\mathcal{J}_5$ is aperiodic, one or both of the following conditions holds:
\begin{enumerate}
\item $\phi(w_Y)$ and $\phi(v_Y)$ are not in the same $\R$-class;
\item $\phi(w_Y)$ and $\phi(v_Y)$ are not in the same $\eL$-class.
\end{enumerate}

First, we suppose that $\phi(w_Y)$ and $\phi(v_Y)$ are not in the same $\R$-class. Since $y_1=z_1$, we have $\phi(y_1)=\phi(z_1)$. If $\phi(y_1)\in A_3$, then $\phi(w_Y)$ and $\phi(v_Y)$ are in the same $\R$-class. Hence, we have $\phi(y_1)\in A_2$. As $\phi(w_Y),\phi(v_Y)\in A_3$, there exist integers $1\leq i_1< r$ and $1\leq i_2< s$ such that $\phi(y_1\cdots y_{i_1})\in A_2$, $\phi(y_1\cdots y_{i_1+1})\in A_3$, $\phi(z_1\cdots z_{i_2})\in A_2$ and $\phi(z_1\cdots z_{i_2+1})\in A_3$. If $y_{i_1}y_{i_1+1}=z_{i_2}z_{i_2+1}$, then we have $y_{i_1}=z_{i_2}$. It follows that $\phi(y_1\cdots y_{i_1})=\phi(z_1\cdots z_{i_2})$, because $\mathcal{J}_5$ is aperiodic, $\phi(y_1\cdots y_{i_1}),\phi(z_1\cdots z_{i_2})\in A_2$ and $y_1=z_1$. Now, as $y_{i_1+1}=z_{i_2+1}$, $\phi(w_Y)$ and $\phi(v_Y)$ are in the same $\R$-class, a contradiction. Then, we have $y_{i_1}y_{i_1+1}\neq z_{i_2}z_{i_2+1}$.

If $\phi(y_{i_1+1})\in A_3$, then $y_{i_1+1}$ does not occur in $y_1\cdots y_{i_1}$ and $z_1\cdots z_{i_2}$ and by Condition~(4).(b), we have $y_{i_1+1}\neq z_{i_2+1}$.
If $z_{i_2+1}$ does not occur in $y_1\cdots y_{i_1}$ and $z_1\cdots z_{i_2}$, 
then the words $w_{\{y_{i_1+1},z_{i_2+1}\}}$ and $v_{\{y_{i_1+1},z_{i_2+1}\}}$ do not satisfy Condition~(1). 
Hence, $z_{i_2+1}$ occurs in one or both words $y_1\cdots y_{i_1}$ and $z_1\cdots z_{i_2}$ that it causes $\phi(z_{i_2+1})\in A_2$.
Since $\phi(z_1\cdots z_{i_2+1})\in A_3$ and $\phi(z_{i_2+1})\in A_2$, 
we have $\phi(z_{i_2}z_{i_2+1})\in A_3$ and, thus,
the word $z_{i_2}z_{i_2+1}$ does not occur in $y_1\cdots y_{i_1}$ and $z_1\cdots z_{i_2}$. 
Now, as $y_{i_1+1}$ does not occur in $y_1\cdots y_{i_1}$ and $z_1\cdots z_{i_2}$, by Condition~(4).(a), we have $y_{i_1}\neq z_{i_2}$.
Now, as $y_{i_1+1}\neq z_{i_2+1}$, Condition~(4).(e).(i) or (4).(e).(ii) does not hold, a contradiction. 
Then, we have $\phi(y_{i_1+1})\in A_2$. Similarly, we have $\phi(z_{i_2+1})\in A_2$.

Now, as $\phi(y_1\cdots y_{i_1+1}), \phi(z_1\cdots z_{i_2+1})\in A_3$ and $\phi(y_1\cdots y_{i_1})$, $\phi(y_{i_1+1})$, $\phi(z_1\cdots z_{i_2})$, $\phi(z_{i_2+1})\in A_2$, we have $\phi(y_{i_1}y_{i_1+1}), \phi(z_{i_2}z_{i_2+1})\in A_3$. 
Hence, we have 
\begin{align*}
\phi(y_{i_1}y_{i_1+1}), \phi(z_{i_2}z_{i_2+1})\in \{
&\alpha_1h_1h_3\alpha_2,\beta_1h_1h_4\beta_2,\gamma_1h_2h_4\gamma_2,\alpha'_1h_3h_1\alpha'_2,\\
&\beta'_1h_4h_1\beta'_2,\gamma'_1h_4h_2\gamma'_2\mid \text{for some elements}\ \alpha_1,\alpha_2,\\
&\beta_1,\beta_2, \gamma_1,\gamma_2,
\alpha'_1,\alpha'_2,\beta'_1,\beta'_2,\gamma'_1,\gamma'_2\in A_2\cup A_1\\
&\text{for which}\ \alpha_1h_1,h_3\alpha_2,\beta_1h_1,h_4\beta_2,\gamma_1h_2,\\
&h_4\gamma_2,\alpha'_1h_3,h_1\alpha'_2, \beta'_1h_4,h_1\beta'_2,\gamma'_1h_4,h_2\gamma'_2\in A_2\}.
\end{align*}
By Lemma~\ref{R_non-eq} and by symmetry, we may assume that, there exist some elements $\alpha,\beta,\gamma,\lambda\in A_2\cup A_1$ such that one of the following conditions holds:
\begin{enumerate}
\item [(A1)] $\phi(y_{i_1})=\alpha a$, $\phi(y_{i_1+1})=b\beta$, $\phi(z_{i_2})=\gamma c$ and $\phi(z_{i_2+1})=d\lambda$, for some elements $\{a,b\}=\{h_1,h_3\}$ and $\{c,d\}=\{h_2,h_4\}$;
\item [(A2)] $\phi(y_{i_1})=\alpha h_1$, $\phi(y_{i_1+1})=h_4\beta$, $\phi(z_{i_2})=\gamma h_1$ and $\phi(z_{i_2+1})=h_3\lambda$;
\item [(A3)] $\phi(y_{i_1})=\alpha h_1$, $\phi(y_{i_1+1})=h_4\beta$, $\phi(z_{i_2})=\gamma h_3$ and $\phi(z_{i_2+1})=h_1\lambda$;
\item [(A4)] $\phi(y_{i_1})=\alpha h_4$, $\phi(y_{i_1+1})=h_1\beta$, $\phi(z_{i_2})=\gamma h_2$ and $\phi(z_{i_2+1})=h_4\lambda$;
\item [(A5)] $\phi(y_{i_1})=\alpha h_4$, $\phi(y_{i_1+1})=h_1\beta$, $\phi(z_{i_2})=\gamma h_4$ and $\phi(z_{i_2+1})=h_2\lambda$.
\end{enumerate}
We obtain that $y_{i_1}y_{i_1+1}\neq z_{i_2}z_{i_2+1}$ and as $\phi(y_{i_1}y_{i_1+1}), \phi(z_{i_2}z_{i_2+1})\in A_3$, the words 
$y_{i_1}y_{i_1+1}$ and $z_{i_2}z_{i_2+1}$ do not occur in the words $y_1\cdots y_{i_1+1}$ and $z_1\cdots z_{i_2+1}$. By Condition~(4).(a), we have $y_{i_1}\neq z_{i_2}$. Also, by considering Conditions~(A1), (A2), (A3), (A4) and (A5), we have $y_{i_1+1}\neq z_{i_2+1}$.

First, suppose that Condition~(A1) holds. 
If $a=h_1$, then by Condition~(4).(e).(i), we have $d=h_2$. Then, we have $b=h_3$ and $c=h_4$. Also, if $a=h_3$, then we have $b=h_1$ and by Condition~(4).(e).(ii), we have $c=h_2$. Then, we have $d=h_4$. Then, we have $(a,c)=(h_1,h_4)$ or $(b,d)=(h_1,h_4)$.
By Condition~(4).(e).(iii), there exists a letter $u_1\in Y$ such that $y_{i_1}u_1$ occurs in $y_1\cdots y_{i_1}$ or $z_1\cdots z_{i_2}$ and also $z_{i_2}u_1$ occurs in $y_1\cdots y_{i_1}$ or $z_1\cdots z_{i_2}$. 
Hence, we have $\phi(y_{i_1}u_1),\phi(z_{i_2}u_1)\in A_2$.
If $a=h_1$ and $c=h_4$, then we have $y_{i_1}u_1\in A_3$ or $z_{i_2}u_1\in A_3$, a contradiction. Because, there does not exist an element $h\in A_2$ such that $h_1h,h_4h\in A_2$. Also, by Condition~(4).(e).(iv), there exists a letter $u_2\in Y$ such that $u_2y_{i_1+1}$ occurs in $y_1\cdots y_{i_1}$ or $z_1\cdots z_{i_2}$ and also $u_2z_{i_2+1}$ occurs in $y_1\cdots y_{i_1}$ or $z_1\cdots z_{i_2}$. 
Then, we have $\phi(u_2y_{i_1+1}),\phi(u_2z_{i_2+1})\in A_2$.
If $b=h_1$ and $d=h_4$, then we have $u_2y_{i_1+1}\in A_3$ or $u_2z_{i_2+1}\in A_3$, a contradiction. 
Therefore, Condition~(A1) does not hold. 
Also, if Conditions~(A2) or (A5) holds, then Condition~(4).(e).(i) fails.
Similarly, if Condition~(A3) or (A4) holds, then Condition~(4).(e).(iv) fails.
Therefore, $\phi(w_Y)$ and $\phi(v_Y)$ are in the same $\R$-class.

Similarly, by Lemma~\ref{L_non-eq}, $\phi(w_Y)$ and $\phi(v_Y)$ are in the same $\eL$-class and, thus, $\mathcal{J}_5$ satisfies the identity $w=v$.
\end{proof}

By Theorem~\ref{main}, the following corollary easily follows.

\begin{cor}\label{x3-x2}
The monoid $\mathcal{J}_5$ satisfies the identities $x^3=x^2$ and does not satisfy the identity $x^2=x$.
Also, if $\mathcal{J}_5$ satisfies the identity $w=v$, then we have $\abs{w_{x}}\geq 2$ if and only if $\abs{v_{x}}\geq 2$, for every $x\in X$.
\end{cor}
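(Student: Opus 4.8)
The plan is to derive all three assertions as direct specializations of Theorem~\ref{main}, taking the subset $Y$ in that theorem to be a singleton $\{x\}$; for a one‑letter subset the condition on words of length $2$ (Condition~(3)) becomes a statement purely about the multiplicity of $x$, and Condition~(4) evaporates.

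First I would handle $x^3 = x^2$. Set $w = x^3$ and $v = x^2$, so $c(w) = c(v) = \{x\}$ and the only nonempty subset of the content is $Y = \{x\}$, giving $w_Y = x^3$ and $v_Y = x^2$. Conditions~(1) and~(2) hold since both words begin and end with $x$; Condition~(3) holds because the only length‑$2$ word on the alphabet $\{x\}$ is $xx$, which occurs in both $x^3$ and $x^2$; and Condition~(4) is vacuous, since any choice of $x,y,z,t\in\{x\}$ forces $xy=zt$. By the sufficiency direction of Theorem~\ref{main}, $\mathcal{J}_5$ satisfies $x^3=x^2$. For $x^2\neq x$ in $\mathcal{J}_5$, take $w = x^2$, $v = x$ and again $Y = \{x\}$: the word $xx$ occurs in $w_Y = x^2$ but not in $v_Y = x$, so Condition~(3) fails, and by the necessity direction of Theorem~\ref{main} the identity $x^2=x$ does not hold. (Alternatively one may exhibit the homomorphism $x\mapsto h_1h_2h_3$, under which $x$ maps into $A_2$ while $x^2 = h_1h_3\in A_3$, so $\phi(x^2)\neq\phi(x)$.)

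Finally, for the multiplicity statement, assume $\mathcal{J}_5$ satisfies $w = v$. If $x\notin c(w)$, then by Lemma~\ref{c-w-v} also $x\notin c(v)$, so $w_x$ and $v_x$ are both the empty word and the equivalence $\abs{w_x}\geq 2 \iff \abs{v_x}\geq 2$ holds trivially (both sides false). If $x\in c(w) = c(v)$, apply Theorem~\ref{main} with $Y = \{x\}$: then $w_Y = w_x = x^{\abs{w_x}}$ and $v_Y = v_x = x^{\abs{v_x}}$, and Condition~(3) asserts that $xx$ occurs in $w_Y$ if and only if it occurs in $v_Y$, i.e. $\abs{w_x}\geq 2 \iff \abs{v_x}\geq 2$.

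There is essentially no obstacle here; the only points demanding a moment's care are noting that Condition~(4) of Theorem~\ref{main} is vacuously satisfied over a one‑letter alphabet (so it never obstructs $x^3 = x^2$), and handling the case $x\notin c(w)$ separately in the last assertion.
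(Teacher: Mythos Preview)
Your proof is correct and follows exactly the approach the paper intends: the paper simply states that the corollary ``easily follows'' from Theorem~\ref{main}, and you have spelled out that derivation by specializing to $Y=\{x\}$ and checking the conditions. There is nothing to add.
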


Note that, in Theorem~\ref{main}, if $xy$ and $zt$ do not occur in $w_1x$ and $v_1z$, and $y=t$, for a subset $Y\subseteq c(w)$, $\mathcal{J}_5$ may satisfy the identity $w=v$. Also, if $xy$ and $zt$ do not occur in $yw_2$ and $tv_2$, and $x=z$, $\mathcal{J}_5$ may satisfy the identity $w=v$. For example, by Theorem~\ref{main}, $\mathcal{J}_5$ satisfies the following identities:
\begin{align*}
xu^2xuz^2u \bb{xx} u^2z^2x^2&=xu^2xuz^2u \bb{zx} u^2z^2x^2,\\
y^2z^2x^2z \bb{xy} xzyz^2x^2y^2&=y^2z^2x^2z \bb{zy} xzyz^2x^2y^2,\\
x^2t^2u^2 \bb{xx} ut^2uxu^2x&=x^2t^2u^2 \bb{xt} ut^2uxu^2x,\\
x^2y^2t^2xty \bb{xy} ty^2t^2x^2&=x^2y^2t^2xty \bb{xt}ty^2t^2x^2.
\end{align*}
Also, note that, there exists an identity such that $xy\neq zt$, $x\neq z$, $y\neq t$ and, $xy$ and $zt$ do not occur in the elements of $C_1$ or $C_2$. For example, for $C_1$, we have the following identity that satisfies all conditions of Theorem~\ref{main}:
\begin{align*}
yxyxzxyzyz\bb{xx}zxzyz^2y^2x^2=yxyxzxyzyz\bb{yy}zxzyz^2y^2x^2.
\end{align*}

In~\cite{Kit-Vol}, the authors characterize the identities of $\mathcal{J}_4$. They show that for words $w$ and $v$ of $X^*$, the monoid $\mathcal{J}_4$ satisfies the identity $w=v$ if $c(w)=c(v)$ and for every non empty subset $Y$ of $c(w)$, Conditions~(1), (2) and (3) of Theorem~\ref{main} hold. Hence, Condition~(4) of Theorem~\ref{main} has a key role to recognize the identities of $\mathcal{J}_4$ which $\mathcal{J}_5$ does not satisfy them.
 

\section*{Acknowledgments}
The author was partially supported by CMUP, which is financed by
national funds through FCT -- Fundação para a Ciência e a Tecnologia,
I.P., under the project UIDB/00144/2020. The author also
acknowledges FCT support through a contract based on the “Lei do
Emprego Científico” (DL 57/2016).

\bibliographystyle{plain}
\bibliography{ref-J5}


\end{document}